\newif\ifpersonal
\newcommand*{\personal}[1]{\textcolor{blue}{(Personal: #1)}}
\newcommand*{\todo}[1]{\textcolor{red}{(Todo: #1)}}
\newcommand*{\question}[1]{\textcolor{purple}{(Question: #1)}}
\newcommand*{\personal}[1]{}
\newcommand*{\todo}[1]{}
\newcommand*{\question}[1]{}
\theoremstyle{plain}
\newtheorem{thm-intro}{Theorem}
\newtheorem{thm}{Theorem}[section]
\newtheorem{lem}[thm]{Lemma}
\newtheorem{prop}[thm]{Proposition}
\theoremstyle{definition}
\newtheorem{defin}[thm]{Definition}
\newtheorem{eg}[thm]{Example}
\newtheorem{rem}[thm]{Remark}
\theoremstyle{remark}
\numberwithin{equation}{section}
\newcommand{\rL}{\mathrm L}
\newcommand{\cC}{\mathcal C}
\newcommand{\cO}{\mathcal O}
\newcommand{\cT}{\mathcal T}
\newcommand{\cS}{\mathcal S}
\newcommand{\cX}{\mathcal X}
\newcommand{\cY}{\mathcal Y}
\newcommand{\cG}{\mathcal G}
\newcommand{\cD}{\mathcal D}
\newcommand{\bP}{\mathbf P}
\newcommand{\PSh}{\mathrm{PSh}}
\newcommand{\Sh}{\mathrm{Sh}}
\newcommand{\Top}{\mathcal T\mathrm{op}}
\newcommand{\inv}{^{-1}}
\newcommand{\canal}{$\mathbb C$-analytic\xspace}
\DeclareMathAlphabet{\mathpzc}{OT1}{pzc}{m}{it}
\newcommand{\sSet}{\mathrm{sSet}}
\newcommand{\rSet}{\mathrm{Set}}
\newcommand{\tauet}{\tau_\mathrm{\acute{e}t}}
\newcommand{\cTdisck}{\cT_{\mathrm{disc}}(k)}
\newcommand{\cTet}{\cT_{\mathrm{\acute{e}t}}}
\newcommand{\cTetk}{\cT_{\mathrm{\acute{e}t}}(k)}
\newcommand{\cGetk}{\cG_{\mathrm{\acute{e}t}}(k)}
\newcommand{\cGetkder}{\cG_{\mathrm{\acute{e}t}}^\mathrm{der}(k)}
\newcommand{\RTop}{\mathcal{RT} \mathrm{op}}
\newcommand{\Str}{\mathrm{Str}}
\newcommand{\CRing}{\mathrm{CRing}}
\newcommand{\DM}{Deligne-Mumford\xspace}
\newcommand{\sMod}{\mathrm{sMod}}
\newcommand{\sAlg}{\mathrm{sAlg}}
\newcommand{\dAff}{\mathrm{dAff}}
\newcommand{\et}{\mathrm{\acute{e}t}}
\newcommand{\Sch}{\mathrm{Sch}}
\newcommand{\SpecEt}{\Spec^{\et}}
\DeclareMathOperator{\Hom}{Hom}
\DeclareMathOperator{\Map}{Map}
\DeclareMathOperator{\Fun}{Fun}
\DeclareMathOperator{\Spec}{Spec}
\DeclareMathOperator*{\colim}{colim}
\begin{document}

\title{Comparison results for derived Deligne-Mumford stacks}

\author{Mauro PORTA}
\address{Mauro PORTA, Institut de Math\'ematiques de Jussieu, CNRS-UMR 7586, Case 7012, Universit\'e Paris Diderot - Paris 7, B\^atiment Sophie Germain 75205 Paris Cedex 13 France}
\email{mauro.porta@imj-prg.fr}
\date{July 1, 2015 (Revised on \today)}

\subjclass[2010]{Primary 14A20}
\keywords{derived stack, Deligne-Mumford stack, spectral stack, HAG II, DAG V}

\begin{abstract}
	In this short note we write down a comparison between the notion of derived Deligne-Mumford stack in the sense of \cite{HAG-II} and the one introduced in \cite{DAG-V}.
	It is folklore that the two theories yield essentially the same objects, but it is difficult to locate in the literature a precise result, despite being sometimes useful to be able to switch between the two frameworks.
\end{abstract}

\maketitle

\personal{PERSONAL COMMENTS ARE SHOWN!!!}

\tableofcontents

\section*{Introduction}

This short note is devoted to establishing in a precise way the folklore equivalence between the theory of derived \DM stacks introduced by B.\ To\"en and G.\ Vezzosi in \cite{HAG-II} and the one defined by J.\ Lurie in \cite{DAG-V}. The main comparison result will be stated in the next section, see \cref{thm:comparison}.
I claim essentially no originality for the results contained in this paper, except perhaps for the exposition.
Indeed, even though \cref{thm:comparison} had appeared nowhere in the literature (at least to the best of our knowledge), there are many hints scattered through the DAG series of J.\ Lurie that leave absolutely no doubt about his knowledge of the precise terms of the comparison.
We will occasionally redirect the reader there.

This note will be hardly of any importance for the community, except perhaps for sake of a written reference.
However, it could still be helpful for someone who is trying to approach the subject of derived algebraic geometry for the first time.
For this reason, I preferred to be lengthy and to give careful explanations even where perhaps they wouldn't have been necessary.

\subsection*{Conventions}

Throughout this note we will work freely with the language of $(\infty,1)$-categories.
We will call them simply $\infty$-categories and our basic reference on the subject is \cite{HTT}.
Occasionally, it will be necessary to consider $(n,1)$-categories. We will refer to such objects as $n$-categories, and we redirect the reader to \cite[§2.3.4]{HTT} for the definitions and the basic properties. There won't be any chance of confusion with the theory of $(\infty,n)$-categories, since it plays no role in this note.
The notation $\cS$ will be reserved for the $\infty$-categories of spaces.
Whenever categorical constructions are used (such as limits, colimits etc.), we mean the corresponding $\infty$-categorical notion.
For the reader with a model categorical background, this means that we are always considering \emph{homotopy} limits, \emph{homotopy} colimits etc.
See \cite[4.2.4.1]{HTT}.

In \cite{HTT} and more generally in the DAG series, whenever $\cC$ is a $1$-category the notation $\mathrm N(\cC)$ denotes $\cC$ reviewed (trivially) as an $\infty$-category. This notation stands for the nerve of the category $\cC$ (and this is because an $\infty$-category in \cite{HTT} is defined to be a quasicategory, that is a simplicial set with special lifting properties). In this note, we will systematically suppress this notation, and we encourage the reader to think to $\infty$-categories as model-independently as possible. For this reason, if $k$ is a (discrete) commutative ring we chose to denote by $\mathrm{CRing}_k$ the $1$-category of discrete $k$-algebras and by $\mathrm{CAlg}_k$ the $\infty$-category underlying the category of simplicial commutative $k$-algebras.

\subsection*{Acknowledgments}

I felt the need to look for a precise comparison result after a discussion I had with Marco Robalo.
I take the opportunity to thank him for all the interesting conversations we had during this year.
I would also like to thank my advisor Gabriele Vezzosi for introducing me to such an interesting topic as derived geometry, under all its facets.

\section{Statement of the comparison result}

Let us start by quickly reviewing the two theories.

\subsection{HAG II framework} \label{subsec:HAG_II}

In \cite{HAG-II} the authors work within the setting previously introduced in \cite{HAG-I}, where the theory of model topoi is introduced and extensively explored. 
This means that model categories are used continuously throughout the whole paper.
In order to compare their constructions with the ones of \cite{DAG-V} it will be convenient to rethink the paper in a purely $\infty$-categorical language.
This is essentially no more than an easy exercise, and we take the opportunity of this review to explain how it can be done.

Let $k$ be a commutative ring (with unit). We will denote by $\sMod_k$ the category of simplicial $k$-modules.
There is an adjunction
\[ U \colon \sMod_k \rightleftarrows \sSet \colon F \qquad (F \dashv U) \]
which satisfies the hypothesis of the lifting principle (see \cite{Schwede_Shipley_Algebras_1998}) and therefore it allows to lift the (Kan) model structure on $\sSet$ to a simplicial model structure on $\sMod_k$. Moreover, with respect to this model structure, $\sMod_k$ becomes a monoidal model category (whose tensor product is computed objectwise).
We set $\sAlg_k \coloneqq \mathrm{Com}(\sMod_k)$.
Using the fact that every object in $\sMod_k$ is fibrant, it is possible to establish that the adjunction
\[ V \colon \sAlg_k \rightleftarrows \sMod_k \colon \mathrm{Sym}_k \qquad (\mathrm{Sym}_k \dashv V) \]
satisfies again the lifting principle (see \cite[§5]{Schwede_Shipley_Algebras_1998}), and therefore the (simplicial) model structure on $\sMod_k$ induces a simplicial model structure on $\sAlg_k$.
We will simply denote by $\mathrm{CAlg}_k$ the $\infty$-category underlying $\sAlg_k$, which can be explicitly thought as the coherent nerve \cite[§1.1.5]{HTT} of the category of fibrant cofibrant objects in $\sAlg_k$. It is customary to denote the opposite of this $\infty$-category by $\mathrm{dAff}_k$ (the $\infty$-category of ``affine derived schemes'').

This $\infty$-category admits another description which is more useful for our purposes. Let $\cTdisck$ the full subcategory of ordinary schemes over $\Spec(k)$ spanned by the finite dimensional relative affine spaces $\mathbb A^n_k$. We can think of $\cTdisck$ as a (one-sorted) Lawvere theory; or, with the language of \cite{DAG-V}, we can equally say that $\cTdisck$ is a \emph{discrete pregeometry}. The $\infty$-category of product preserving functors with values in the $\infty$-category of spaces can be identified with the \emph{sifted completion} of $\cTdisck$ and we will denote it by $\mathcal P_\Sigma(\cTdisck)$ (see \cite[Definition 5.5.8.8]{HTT}). This is a presentable $\infty$-category and therefore it admits a presentation by a model category \cite[A.3.7.6]{HTT}, which can be easily obtained as follows: consider the category of simplicial presheaves on $\cTdisck$ endowed with the global projective model structure.
Then the underlying $\infty$-category of the Bousfield localization of this  model category at the collection of maps $y(\mathbb A^{n}_k) \coprod y(\mathbb A^m_k) \to y(\mathbb A^{n+m}_k)$ (where $y$ denotes the Yoneda embedding) precisely coincides with $\mathcal P_{\Sigma}(\cTdisck)$.
It is somehow remarkable that $\mathcal P_\Sigma(\cTdisck)$ admits a much stricter presentation.
Consider in fact the category of functors $\cTdisck \to \sSet$ which \emph{strictly} preserve products. It follows from a theorem of Quillen \cite[5.5.9.1]{HTT} that this simplicial category admits a global projective model structure. Moreover, a theorem of J.\ Bergner \cite[5.5.9.2]{HTT} shows that the underlying $\infty$-category coincides precisely with $\mathcal P_\Sigma(\cTdisck)$.
However, the category of product preserving functors $\cTdisck \to \sSet$ is precisely equivalent to $\sAlg_k$, and the two model structures agree.
Therefore, we have a categorical equivalence
\[ \mathrm{CAlg}_k \simeq \mathcal P_\Sigma(\cTdisck) \]
The reader might want to consult also \cite[Remark 4.1.2]{DAG-V} for another discussion of this equivalence.

The next step is to introduce the \'etale topology on the model category $\sAlg_k$. As this notion only depends on the homotopy category of $\sAlg_k$ (cf.\ \cite[Definition 4.3.1]{HAG-I}) it also defines a Grothendieck topology on the $\infty$-category $\mathrm{CAlg}_k$ (cf.\ \cite[6.2.2.3]{HTT}).
We briefly recall that a morphism $f \colon A \to B$ in $\sAlg_k$ is said to be \'etale if $\pi_0(f) \colon \pi_0(A) \to \pi_0(B)$ is \'etale and the canonical map
\[ \pi_i(A) \otimes_{\pi_0(A)} \pi_0(B) \to \pi_i(B) \]
is an isomorphism (that is, the morphism is strong).
Similarly, a morphism $f \colon A \to B$ is smooth if it is strong and $\pi_0(f) \colon \pi_0(A) \to \pi_0(B)$ is smooth.
We will denote by $\tauet$ the \'etale topology and by $\bP_\et$ (resp.\ $\bP_{\mathrm{sm}}$) the collection of \'etale (resp.\ smooth) morphisms.
Using these data, one can form the model category of hypersheaves with respect to the \'etale topology. Recall that this is obtained in the following two steps:
\begin{enumerate}
	\item consider the global projective model structure on $\mathrm{Funct}(\sAlg_k, \sSet)$;
	\item consider next the Bousfield localization of this model structure at the collection of hypercovers (see \cite[§4.4 and §4.5]{HAG-I} or \cite[§6.5.3]{HTT}).
\end{enumerate}
The result is what is denoted in \cite{HAG-II} by $\dAff^{\sim, \tauet}$. It follows from \cite[6.5.2.14, 6.5.2.15]{HTT} that the underlying $\infty$-category of $\dAff^{\sim, \tauet}$ can be simply identified with the hypercompletion $\Sh(\dAff_k, \tauet)^\wedge$ (we refer the reader to \cite[§6.5.2]{HTT} for a detailed discussion of this notion). We will usually call the objects in $\Sh(\dAff_k, \tauet)^\wedge$ as stacks (for the \'etale topology).
The next step is to consider geometric stacks inside $\Sh(\dAff_k, \tauet)^\wedge$.
Since there are many references for this subject \cite{Simpson_Algebraic_1996,HAG-II,Toen_Algebrisation_2008,Porta_Yu_Higher_analytic_stacks_2014}, we won't repeat the full definition here, but we will limit ourselves to
Roughly speaking, geometric stacks are stacks $X$ admitting an atlas $p \colon U \to X$, that is an effective epimorphism $p$ (see \cite[§6.2.3]{HTT} and the very useful \cite[7.2.1.14]{HTT}) whose source $U$ is an affine derived scheme (seen as a stack via the $\infty$-categorical Yoneda embedding, see \cite[§5.1.3]{HTT} or \cite[§5.2.1]{Lurie_Higher_algebra}) and which is in $\bP_\et$ or in $\bP_{\mathrm{sm}}$.
In the first case, we will refer to the stack as a (higher) derived \DM stack, and in the latter as a (higher) derived Artin stack.
In this note, we will be only concerned with derived \DM stacks. We will denote the full subcategory of $\Sh(\dAff_k, \tauet)^\wedge$ spanned by derived \DM stacks by $\mathbf{DM}$.
Let us complete the review of \cite{HAG-II} with the following two additional remarks:

\begin{enumerate}
	\item Geometric stack is always stable under weak equivalences because only homotopy-invariant categorical constructs are used in formulating it (i.e.\ homotopy coproducts, homotopy geometric realizations etc.). Therefore \cite[4.2.4.1]{HTT} shows that the notion of geometric stack can be equally formulated at the level of the $\infty$-category $\Sh(\dAff_k, \tauet)^\wedge$.
	\item The category $\mathbf{DM}$ is naturally filtered by the notion of geometric level: a stack is said to be $(-1)$-geometric if it is representable by an object in $\dAff_k$. If $A \in \mathrm{CAlg}_k$, we choose to represent its functor of points by $\Spec(A) \in \mathbf{DM} \subset \Sh(\dAff_k, \tauet)^\wedge$.
	Next, proceeding by induction, we will say that a stack $X$ is $n$-geometric if it admits an atlas $p \colon U \to X$ which is representable by $(n-1)$-geometric stacks in the following precise sense: for every representable stack $\Spec(A)$ and any map $\Spec(A) \to X$ the base change $\Spec(A) \times_X U$ is $(n-1)$-geometric. We will denote by $\mathbf{DM}_n$ the full subcategory of $\mathbf{DM}$ spanned by $n$-geometric derived \DM stacks whose restriction to $\CRing_k$ is an $n$-truncated stack (i.e.\ it takes values in $n$-truncated spaces).
\end{enumerate}

\subsection{DAG V framework} \label{subsec:DAG_V}

The point of view taken in \cite{DAG-V} is quite different.
We refer the reader to the introduction of \cite{Porta_GAGA_2015} for an expository account of the role of (pre)geometries (cf.\ \cite[§1.2, 3.1]{DAG-V}) in the construction of affine derived objects.
Here, we will content ourselves with a short review of the theory of $\cG$-schemes for a given geometry $\cG$ from the point of view of \cite{DAG-V}.
Recall either from \cite[Definition 12.8]{DAG-V} or from the introduction of \cite{Porta_GAGA_2015} that a geometry is an $\infty$-category $\cG$ with finite limits and equipped with some extra structure, consisting of a collection of ``admissible'' morphisms and a Grothendieck topology $\tau$ on $\cG$ generated by admissible morphisms.
If $\cX$ is an $\infty$-topos and $\cG$ is a geometry, it is defined an $\infty$-category of $\cG$-structures, denoted $\Str_\cG(\cX)$.
Recall that a $\cG$-structure is a functor $\cG \to \cX$ which is left exact and takes $\tau$-coverings to effective epimorphisms in $\cX$.

Before moving on, it is important to discuss a very important special case.
If $\cX$ is the $\infty$-topos of $\cS$-valued sheaves on some topological space $X$, we can think of a $\cG$-structure on $\cX$ as a sheaf on $X$ with values in the $\infty$-category $\mathrm{Ind}(\cG^{\mathrm{op}})$ having special behavior on the stalks, as the next key example shows:

\begin{eg} \label{eg:discrete_etale_geometry}
	Let $k$ be a fixed (discrete) commutative ring. We denote by $\cGetk$ to be the category $(\mathrm{CRing}_k^{\mathrm{f.p.}})^{\mathrm{op}}$, the opposite of the category of discrete $k$-algebras of finite presentation.
	Moreover, we declare a morphism in $\cGetk$ to be an admissible morphism if and only if it is \'etale, and we endow $\cGetk$ with the usual \'etale topology.
	In this case, $\mathrm{Ind}(\cGetk^{\mathrm{op}}) \simeq \mathrm{CRing}_k$, the category of discrete $k$-algebras of finite presentation.
	Then a $\cGetk$-structure $\cO$ on $\Sh(X)$ is a sheaf of discrete commutative rings on $X$ whose stalks are strictly henselian local rings.
	The fact that $\cO$ has to be discrete follows from his left exactness (see \cite[§5.5.6]{HTT} for a general discussion of truncated objects in an $\infty$-category and more specifically \cite[5.5.6.16]{HTT} for the needed property).
	The statement on stalks, instead, is due to the following fact: for every point $x \in X$ (formally seen as a geometric morphism $x\inv \colon \Sh(X) \rightleftarrows \cS \colon x_*$) the stalk $\cO_x \coloneqq x\inv \cO$ has to take \'etale coverings of $k$-algebras of finite presentation to epimorphisms in $\rSet$. Unraveling the definitions, this means that for every \'etale cover $\{A \to A_i\}$ in $\cGetk$ and every solid diagram
	\[ \begin{tikzcd}
		{} & \coprod \Spec(A_i) \arrow{d} \\
		\Spec(\cO_x) \arrow{r} \arrow[dotted]{ur} & \Spec(A)
	\end{tikzcd} \]
	the lifting exists. This is a possible characterization of strictly henselian local rings (see \cite[Tag 04GG, condition (8)]{stacks-project}).
\end{eg}

As in the case of locally ringed spaces, we are not really interested in all the transformations of $\cG$-structures, but only in those that have a good local behavior. This can be made precise by introducing the notion of \emph{local transformation of $\cG$-structures}.
We recall that a morphism $f \colon \cO \to \cO'$ in $\Str_\cG(\cX)$ is said to be local if for every admissible morphism $f \colon U \to V$ in $\cG$ the induced square
\[ \begin{tikzcd}
	\cO(U) \arrow{r} \arrow{d} & \cO(V) \arrow{d} \\
	\cO'(U) \arrow{r} & \cO'(V)
\end{tikzcd} \]
is a pullback in $\cX$.
In the above example, the condition simply translates in the more familiar one of local morphism of local rings.

Precisely as in the case of locally ringed spaces, we can use $\cG$-structures and local morphisms of such to build an $\infty$-category of $\cG$-structured topoi, denoted $\Top(\cG)$.
The actual construction is rather involved, and we refer to \cite[Definition 1.4.8]{DAG-V} for the details.
Here, we shall content ourselves with the following rougher idea: the $\infty$-category $\Top(\cG)$ has as objects pairs $(\cX, \cO_\cX)$ where $\cX$ is an $\infty$-topos and $\cO_\cX$ is a $\cG$-structure on $\cX$, and as $1$-morphisms pairs $(f, \alpha) \colon (\cX, \cO_\cX) \to (\cY, \cO_\cY)$ where $f$ is a geometric morphism $f\inv \colon \cY \rightleftarrows \cX \colon f_*$ and $\alpha \colon f\inv \cO_\cY \to \cO_\cX$ is a \emph{local} transformation of $\cG$-structures on $\cX$.

The category $\Top(\cG)$ is too huge to be of any practical interest. Comparatively, it seems even huger than $\Sh(\dAff_k, \tauet)^\wedge$.
Therefore we are going to construct a full subcategory $\Sch(\cG)$ which morally corresponds to the subcategory of $\Sh(\dAff_k, \tauet)^\wedge$ spanned by geometric stacks. Stated in this way it is not quite a true statement, as we will see in discussing \cref{thm:comparison}, but until then it is a reasonable analogy.
The idea is not at all complicated: as schemes are a full subcategory of locally ringed spaces spanned by those objects which are locally isomorphic to special ones constructed out of commutative rings, so we will proceed in defining $\Sch(\cG)$.
As \cref{eg:discrete_etale_geometry} suggests, what we should try to do is construct a $\cG$-structured topos out of every object of $\mathrm{Ind}(\cG)$.
To keep the exposition at an elementary level, we will limit ourselves to consider the case of objects in $\cG$, and we refer the reader to \cite[§2.2]{DAG-V} for the general discussion.

Let $A \in \cG^{\mathrm{op}}$. We will denote by $A_{\mathrm{adm}}$ the small admissible site of $A$. The underlying $\infty$-category of $A_{\mathrm{adm}}$ is the opposite of the full subcategory of $\cG^{\mathrm{op}}_{A/}$ spanned by admissible morphisms $A \to B$. We then endow $A_{\mathrm{adm}}$ with the Grothendieck topology induced from the one on $\cG$, which we will still denote $\tau$.
Finally, we let $\cX_A$ be the \emph{non hypercomplete} $\infty$-topos of ($\cS$-valued) sheaves on $A_{\mathrm{adm}}$.
We next construct the $\cG$-structure on $\cX_A$.
There is a forgetful functor $A_{\mathrm{adm}} \to \cG$ which induces a composition
\[ A_{\mathrm{adm}}^{\mathrm{op}} \times \cG  \to \cG^{\mathrm{op}} \times \cG \xrightarrow{y} \cS \]
where $y$ is the functor classifying the Yoneda embedding, see \cite[§5.2.1]{Lurie_Higher_algebra}.
This corresponds to a functor
\[ \cO_A \colon \cG \to \PSh(A_{\mathrm{adm}}) \xrightarrow{\rL} \Sh(A_{\mathrm{adm}}, \tau) \]
where $\rL$ is the sheafification functor.
Note that if the Grothendieck topology on $\cG$ was subcanonical, there wouldn't be any need to apply $\rL$.
Observe further that $\cO_A$ is indeed left exact by the very construction.
We leave as an exercise to the reader to prove that $\cO_A$ takes $\tau$-coverings in effective epimorphisms (see \cite[Proposition 2.2.11]{DAG-V}).
Therefore the pair $(\cX_A, \cO_A)$ defines a $\cG$-structured topos, which we will denote as $\Spec^\cG(A)$.

\begin{rem}
	As it always happens in the $\infty$-categorical world, the construction of the functoriality is the most subtle point in the definition of an $\infty$-functor.
	It would rather hard if not impossible to explicitly exhibit $\Spec^\cG(-)$ as a functor $\cG \simeq (\cG^{\mathrm{op}})^{\mathrm{op}} \to \Top(\cG)$ if some alternative description wouldn't be available.
	We won't discuss the details, but, roughly speaking, the idea is to use the universal property of $\Spec^\cG(-)$ which describes it as a right adjoint to the global section functor $\Top(\cG) \to \mathrm{Ind}(\cG^{\mathrm{op}})$, informally defined by $(\cX, \cO_\cX) \mapsto \Map_\cX(\mathbf 1_\cX, \cO_\cX)$ (observe that the latter becomes a finite limit preserving functor $\cG \to \cS$ and therefore can be identified with an element of $\mathrm{Ind}(\cG^{\mathrm{op}})$). We refer the reader to \cite[§2.2]{DAG-V} (and especially to \cite[Theorem 2.2.12]{DAG-V}) for a detailed discussion.
\end{rem}

With these preparations, it is now easy to define $\Sch(\cG)$ as a full subcategory of $\Top(\cG)$.
We will say that a $\cG$-structured topos $(\cX, \cO_\cX)$ is a $\cG$-scheme (resp.\ a $\cG$-scheme locally of finite presentation) if there exists a collection of objects $U_i \in \cX$ satisfying the following two conditions:
\begin{enumerate}
	\item the joint morphism $\coprod U_i \to \mathbf 1_\cX$ is an effective epimorphism;
	\item for every index $i$, there exists an object $A_i \in \mathrm{Ind}(\cG^{\mathrm{op}})$ (resp.\ an object $A_i \in \cG^{\mathrm{op}}$) and an equivalence of $\cG$-structured topoi $(\cX_{/U_i}, \cO_\cX |_{U_i}) \simeq \Spec^\cG(A_i)$.
\end{enumerate}

We conclude this review with two important examples and some discussion about them.

\begin{eg} \label{eg:underived_DM_stacks}
	Let us go back to the geometry $\cGetk$ of \cref{eg:discrete_etale_geometry}. The category $\Sch(\cG)$ contains a very interesting full subcategory.
	To describe it, let us briefly recall that an $\infty$-topos $\cX$ is said to be $n$-localic (for $n \ge -1$ an integer) if it can be thought as the category of ($\cS$-valued) sheaves on some Grothendieck site $(\cC, \tau)$ with $\cG$ being an $n$-category (see our conventions on the meaning of this).
	We refer the reader \cite[§6.4.5]{HTT} for a more detailed account on this notion.
	Let $\Sch_{\le 1}(\cG)$ be the full subcategory of $\Sch(\cG)$ spanned by $\cG$-schemes $(\cX, \cO_\cX)$ such that $\cX$ is $1$-localic.
	Then \cite[Theorem 2.6.18]{DAG-V} shows that $\Sch_{\le 1}(\cG)$ is equivalent to the category of $1$-geometric (underived) \DM stacks.
	It will be a consequence of \cref{thm:comparison} that more generally $\Sch_{\le n}(\cG)$ is equivalent to the $\infty$-category of $n$-geometric $n$-truncated (underived) \DM stacks.
\end{eg}

\begin{eg} \label{eg:derived_DM_stacks}
	Let us define a new geometry $\cGetkder$ as follows. We let the underlying $\infty$-category of $\cGetkder$ to be the opposite of the full subcategory of $\mathrm{CAlg}_k$ spanned by compact objects.
	Observe that $\mathrm{CAlg}_k = \mathrm{Ind}(\cGetkder^{\mathrm{op}})$.	
	We will say that a morphism in $\cGetkder$ is admissible precisely when it is a (derived) \'etale morphism (see the previous section for the definition). We will further endow $\cGetkder$ with the (derived) \'etale topology, which we will still denote $\tauet$ (observe that if $A \to B$ is an \'etale map in the derived sense and the source is discrete, then so is the target).
	In this special case, we will write $\SpecEt$ instead of $\Spec^{\cGetkder}$.
	Following \cite[Definition 4.3.20]{DAG-V} (and using the important \cite[Proposition 4.3.15]{DAG-V}), we will say that a derived \DM stack (in the sense of \cite{DAG-V}) is a $\cGetkder$-scheme.
\end{eg}

The following theorem summarizes several results of \cite{DAG-V}. We report them here because it clarifies the relation between the above two examples:

\begin{thm}
	\begin{enumerate}
		\item \cite[Proposition 4.3.15]{DAG-V}The natural inclusion $\cTetk \to \cGetkder$ exhibits the latter as a geometric envelope of $\cTetk$;
		\item \cite[Remark 4.3.14 and Corollary 4.3.16]{DAG-V} the truncation functor $\pi_0 \colon \cGetkder \to \cGetk$ exhibits the latter as a $0$-stub for $\cGetkder$. In particular, the composition $\cTetk \to \cGetkder \to \cGetk$ exhibits $\cGetk$ as a $0$-truncated geometric envelope of $\cTetk$.
		\item \cite[Proposition 4.3.21]{DAG-V} the category of $1$-localic $\cGetk$-schemes is equivalent to the category of $\cGetkder$-schemes which are $1$-localic and $0$-truncated.
	\end{enumerate}
\end{thm}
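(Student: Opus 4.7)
The plan is to address each of the three parts in turn, leaning on the universal property formalism for pregeometries, geometries, envelopes, and $n$-stubs developed in \cite[§3]{DAG-V}.

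For part (1), I would unpack the characterizing universal property of a geometric envelope: it is a left adjoint to the forgetful functor from geometries to pregeometries, and concretely $\cGetkder$ should be exhibited as the opposite of the compact objects in the $\infty$-category $\Str^\mathrm{loc}_{\cTetk}(\cS)$ of $\cTetk$-structures in spaces. The essential input has in fact already been recorded: by the review in \cref{subsec:HAG_II}, product-preserving functors $\cTdisck \to \cS$ correspond to objects of $\mathrm{CAlg}_k$, and the further local structure imposed by the admissible morphisms of $\cTetk$ is what cuts down to $\cTetk$-structures. Once one identifies $\Str^\mathrm{loc}_{\cTetk}(\cS) \simeq \mathrm{CAlg}_k$, the opposite of its compact objects is by definition $\cGetkder$. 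The remaining work is to verify that the admissible morphisms and the topology on $\cGetkder$ coincide with those induced from $\cTetk$, which reduces to the standard structure theorem for étale maps of simplicial commutative algebras and the transitivity of étale topologies.

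For part (2), showing that $\pi_0 \colon \cGetkder \to \cGetk$ is a $0$-stub means checking the corresponding universal property among $0$-truncated geometries. The key technical points are that $\pi_0$ is left adjoint to the inclusion $\mathrm{CRing}_k \hookrightarrow \mathrm{CAlg}_k$, preserves finite presentation (so restricts to a functor between the opposite compact subcategories), carries derived étale maps to ordinary étale maps, and carries the derived étale topology to the classical étale topology. Universality then follows formally from the fact that any morphism of geometries from $\cGetkder$ to a $0$-truncated geometry must factor through $\pi_0$ on the level of objects. The composition statement with $\cTetk$ follows by combining this with part (1) using the compatibility of envelopes and stubs \cite[Remark 3.4.14]{DAG-V}.

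For part (3), the plan is to invoke the general principle that for a geometry $\cG$ with $0$-stub $\cG^{\le 0}$, passage to the stub induces an equivalence between $n$-localic $\cG^{\le 0}$-schemes and $n$-localic $0$-truncated $\cG$-schemes (\cite[§3.5]{DAG-V}). Specializing to $n = 1$ and $\cG = \cGetkder$, $\cG^{\le 0} = \cGetk$ yields exactly the asserted equivalence. The most delicate point, and where I expect the main obstacle to lie, is verifying that the $0$-truncation of the structure sheaf interacts cleanly with the $1$-localic hypothesis on the underlying $\infty$-topos; Lurie handles this in \cite[Proposition 4.3.21]{DAG-V} by a local-to-global reduction to the affine case $\SpecEt(A)$ for $A \in \cGetkder^{\mathrm{op}}$, where it reduces to the evident statement that $\Spec^{\cGetk}(\pi_0 A)$ has the same underlying $1$-localic topos as $\SpecEt(A)$ and its structure sheaf $\cO$ satisfies $\pi_0 \cO = \cO_{\pi_0 A}$.
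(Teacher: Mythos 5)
First, a point of calibration: the paper does not actually prove this theorem --- it is stated explicitly as a summary of results from \cite{DAG-V}, with each item carrying its precise citation (Proposition 4.3.15, Remark 4.3.14 and Corollary 4.3.16, Proposition 4.3.21). So any argument you supply is necessarily a different route from the paper's. Your outline for parts (2) and (3) does track how Lurie argues: the $0$-stub is verified by checking that $\pi_0$ preserves finite presentation, carries derived \'etale maps and covers to classical ones, and satisfies the relevant universal property among $0$-truncated geometries; and part (3) is indeed a local-to-global reduction to the affine case, where the underlying $1$-localic topos of $\SpecEt(A)$ depends only on $\pi_0(A)$.

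Part (1), however, contains a genuine error. The universal property of a geometric envelope $\cTetk \to \cG$ is tested against arbitrary idempotent-complete $\infty$-categories $\cC$ admitting finite limits: it asks that left-exact functors $\cG \to \cC$ correspond to functors $\cTetk \to \cC$ preserving finite products and pullbacks along admissible morphisms. Neither the covering condition nor the locality condition enters, since $\cC$ need not be an $\infty$-topos. Accordingly, the envelope is the opposite of the compact objects of $\Fun^{\mathrm{ad}}(\cTetk, \cS)$, and the substantive content of \cite[Proposition 4.3.15]{DAG-V} is that restriction along $\cTdisck \to \cTetk$ identifies $\Fun^{\mathrm{ad}}(\cTetk, \cS)$ with $\Fun^{\Pi}(\cTdisck, \cS) \simeq \mathrm{CAlg}_k$, using the structure theory of \'etale morphisms. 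Your recipe instead takes the opposite of the compact objects of $\Str^{\mathrm{loc}}_{\cTetk}(\cS)$; but that category consists of the \emph{strictly henselian} simplicial commutative $k$-algebras with local morphisms (exactly as in \cref{eg:discrete_etale_geometry} in the discrete case), and it is not equivalent to $\mathrm{CAlg}_k$. Indeed, your own phrase that the local structure ``cuts down'' to $\cTetk$-structures contradicts the identification $\Str^{\mathrm{loc}}_{\cTetk}(\cS) \simeq \mathrm{CAlg}_k$ you then assert. Taken literally, your construction produces the wrong geometry; the fix is to replace $\Str^{\mathrm{loc}}$ by $\Fun^{\mathrm{ad}}$ throughout part (1), after which the rest of your outline goes through.
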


\begin{rem}
	The derived \DM stacks of \cref{eg:derived_DM_stacks} are locally \emph{connective}.
	There is a non-connective variation of such objects, known as \emph{spectral \DM stacks}.
	This plays a major role in a certain branch of algebraic topology known as \emph{chromatic homotopy theory}.
	As we won't be concerned with such objects in this note, we invite the interested reader to consult \cite[§2, §8]{DAG-VII}.
	\cite[Corollary 9.28]{DAG-VII} completes the task of comparing the category of spectral \DM stacks with the one of \cref{eg:derived_DM_stacks}.
	We would like to draw the attention of the reader to the fact that characteristic $0$ is needed to have such a comparison.
	This is a complication that comes from the interaction with power operations in algebraic topology.
	In this note, no hypothesis on the characteristic is needed.
\end{rem}

\subsection{The main theorem}

We are finally ready to discuss the main comparison result.
In order to avoid confusion, we will refer from this moment on to derived \DM stacks as the geometric stacks for the HAG context $(\dAff_k, \tauet, \bP_\et)$ we discussed in \cref{subsec:HAG_II}, and to $\cGetkder$-schemes to the derived \DM stacks in the sense of \cite{DAG-V} we introduced in \cref{eg:derived_DM_stacks}.

Taking inspiration from the comparison discussed in \cref{eg:underived_DM_stacks}, we introduce the full subcategory $\Sch_{\le n}(\cGetkder)$ of $\Sch(\cGetkder)$ spanned by $\cGetkder$-schemes $(\cX, \cO_\cX)$ whose underlying $\infty$-topos $\cX$ is $n$-localic. We will further let $\Sch_{\mathrm{loc}}(\cGetkder)$ be the reunion of the $\infty$-categories $\Sch_{\le n}(\cGetkder)$ as $n$ varies.
The comparison result can therefore be stated as follows:

\begin{thm} \label{thm:comparison}
	There exists an equivalence of $\infty$-categories
	\[ \Phi \colon \Sch_{\mathrm{loc}}(\cGetkder) \rightleftarrows \mathbf{DM} \colon \Psi \]
	Moreover, for every $n \ge 1$, this restricts to an equivalence
	\[ \Sch_{\le n}(\cGetkder) \simeq \mathbf{DM}_n . \]
\end{thm}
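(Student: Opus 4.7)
The plan is to construct $\Phi$ and $\Psi$ explicitly, show they are mutually inverse, and verify the truncation-level matching by induction on $n$, mirroring and extending to higher levels the argument by which \cite[Theorem 2.6.18]{DAG-V} handles the classical $1$-localic case. The guiding idea is the familiar dictionary between the functor-of-points picture of stacks (HAG-II) and the structured-topoi picture (DAG-V): $\SpecEt$ provides the bridge, so both functors are built from it.

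For $\Phi$, I would define
\[ \Phi(\cX, \cO_\cX)(A) \coloneqq \Map_{\Top(\cGetkder)}\bigl(\SpecEt(A), (\cX, \cO_\cX)\bigr) \]
for $A \in \mathrm{CAlg}_k$. Hyperdescent of $\Phi(\cX, \cO_\cX)$ for the étale topology would then follow from the fact that $\SpecEt$ sends étale hypercovers of affines to hypercovers in $\Top(\cGetkder)$, together with the existence of small limits in $\Top(\cGetkder)$. If $(\cX, \cO_\cX)$ admits an atlas $\coprod_i \SpecEt(A_i) \to (\cX, \cO_\cX)$ by étale maps, then applying $\Phi$ produces an étale cover $\coprod_i \Spec(A_i) \to \Phi(\cX, \cO_\cX)$, exhibiting $\Phi(\cX, \cO_\cX)$ as an object of $\mathbf{DM}$.

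The functor $\Psi$ runs in the opposite direction. Given $X \in \mathbf{DM}_n$ with an étale atlas $p \colon U \to X$, with $U = \coprod_i \Spec(A_i)$, form the Čech nerve $U_\bullet = \cosk_0(p)$; inductively each term $U_k$ carries an associated $\cGetkder$-structured topos $\Psi(U_k)$, and one sets $\Psi(X) \coloneqq \colim_{\Delta^\op} \Psi(U_\bullet)$ computed in $\Top(\cGetkder)$. Independence of the chosen atlas would be verified by comparing two atlases through their fibre product over $X$. The resulting object is a $\cGetkder$-scheme because it is locally equivalent to $\SpecEt(A_i)$ by construction, and a direct analysis of the colimit on the topos part should show that $\Psi(X)$ is $n$-localic whenever $X$ is $n$-geometric and $n$-truncated.

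The two compositions will be identities: $\Phi \circ \Psi \simeq \id$ is immediate from the universal property of $\SpecEt$ as the right adjoint to the global-sections functor \cite[Theorem 2.2.12]{DAG-V}, applied termwise to the defining Čech nerve of $\Psi(X)$; while $\Psi \circ \Phi \simeq \id$ follows from recognizing that any atlas of $(\cX, \cO_\cX)$ induces, via $\Phi$, an atlas of the same shape for $\Phi(\cX, \cO_\cX)$, so both computations yield the same colimit. The main obstacle I anticipate is the matching of truncation levels on the two sides, that is, the identification of $n$-localic $\cGetkder$-schemes with $n$-geometric $n$-truncated derived \DM stacks: while the correspondence of truncation levels on the structure-sheaf side is essentially tautological, propagating $n$-localicity of the underlying topos through the inductive step on the Čech nerve --- using the technology of \cite[§6.4.5]{HTT} together with careful control on the values of $X$ on $\CRing_k$ --- is where most of the care will be required.
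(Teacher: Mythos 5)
The definition you give for $\Phi$ is the same as the paper's, namely $\phi(X)(A) = \Map(\SpecEt(A), X)$, fully faithful by \cite[Theorem 2.4.1]{DAG-V}. The genuine gap is your claim that hyperdescent of $\Phi(\cX,\cO_\cX)$ ``follows from the fact that $\SpecEt$ sends \'etale hypercovers of affines to hypercovers in $\Top(\cGetkder)$'': this is exactly the point that fails, and it is the central difficulty the whole proof is organized around. For $A \in \mathrm{CAlg}_k$ the underlying $\infty$-topos $\cX_A = \Sh(A_\et, \tauet)$ of $\SpecEt(A)$ is \emph{not} hypercomplete. An \'etale hypercover $U^\bullet \to U$ therefore yields a simplicial object $V^\bullet$ in $\cX_U$ whose geometric realization is only $\infty$-connected over $\mathbf 1_{\cX_U}$, not equivalent to it, so $\SpecEt(U)$ need not be the colimit of $\SpecEt(U^\bullet)$ in $\Top(\cGetkder)$, and $\lim \Map(\SpecEt(U^\bullet), X)$ need not recover $\Map(\SpecEt(U), X)$. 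This is precisely why the theorem is stated for $\Sch_{\mathrm{loc}}(\cGetkder)$ rather than all of $\Sch(\cGetkder)$. The repair is to work inside $\Top_{\le n}(\cGetkder)$, the $n$-localic structured topoi: this is an $(n+1)$-category (\cref{lem:n_topoi}), so \v{C}ech descent --- which does hold, by \cite[6.1.3.9]{HTT} --- automatically upgrades to hyperdescent (\cref{prop:descent_vs_hyperdescent}). Your argument as written would prove hypercompleteness of $\phi(X)$ for an arbitrary $\cGetkder$-scheme, which is more than is true.

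Two further points. First, your $\Psi$, built as $\colim_{\Delta\op}\Psi(U_\bullet)$ in $\Top(\cGetkder)$, is a genuinely different route from the paper, which instead constructs $(\cX,\cO_X)$ directly as $\Sh(X_\et,\tauet)$ with the tautological structure sheaf, checks it is a $\cGetkder$-scheme (\cref{prop:Get_scheme}), and deduces essential surjectivity of $\phi$; your route could be made to work, but well-definedness, independence of the atlas, and above all $n$-localicity of the colimit are exactly where the same hypercompleteness subtlety resurfaces, and ``$\Phi\circ\Psi\simeq\id$ is immediate'' presupposes that $\Phi$ preserves these colimits, which is essentially the geometricity statement you have not proved. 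Second, your induction ``mirroring'' \cite[Theorem 2.6.18]{DAG-V} ignores the off-by-one at the bottom: $\cX_A$ is $1$-localic rather than $0$-localic, so the base case --- $0$-truncated morphisms $0$-representable by \'etale maps, i.e.\ derived algebraic spaces --- cannot be absorbed into the inductive step and must be handled separately (\cref{def:derived_etale_algebraic_space}, \cref{prop:algebraic_spaces}); and the matching of $n$-localic with $n$-geometric and $n$-truncated, which you explicitly defer, rests on \cref{lem:decreasing_truncated_level} together with \cite[Lemma 2.6.19]{DAG-V} and is not tautological.
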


The next section is entirely devoted to the proof of this theorem.

\begin{rem}
	The statement \cref{thm:comparison} is very similar to the one of \cite[Theorem 3.7]{Porta_GAGA_2015}.
	However, the proof of \cref{thm:comparison} is somehow subtler.
	One of the key points is that if $(\cX, \cO_\cX)$ is a derived \canal space (cf.\ \cite[Definition 12.3]{DAG-IX} or \cite[Definition 1.3]{Porta_GAGA_2015}), then the $\infty$-topos $\cX$ is always hypercomplete (see \cite[Lemma 3.2]{Porta_GAGA_2015}).
	This is false in the algebraic setting, and the reason is that if $A \in \mathrm{CAlg}_k$, then usually $\cX_A \coloneqq \Sh(A_\et)$ itself is not hypercomplete.
	As consequence, there is no direct analogue in this setting of \cite[Corollary 3.4]{Porta_GAGA_2015}: one needs to restrict himself to the case of localic $\cGetkder$-schemes to prove the corresponding statement (see \cref{prop:phi_X_hypercomplete}).
	
	Another important point that marks the difference is that if $A \in \mathrm{CAlg}_k$ then $\cX_A$ is $1$-localic instead of $0$-localic. Therefore the case of algebraic spaces has to be dealt with separately and it cannot be uniformly included in an induction proof. This is done in \cref{subsec:derived_algebraic_spaces}.
\end{rem}

\section{The proof of the comparison result}

We begin with the construction of the two functors $\Phi$ and $\Psi$.
\cite[Theorem 2.4.1]{DAG-V} provides us with a fully faithful embedding
\[ \phi \colon \Sch(\cGetkder) \to \Fun(\mathrm{Ind}(\cGetkder^{\mathrm{op}}), \cS) = \Fun(\dAff^{\mathrm{op}}, \cS) , \]
Unraveling the definition of $\phi$, we see that for $X = (\cX, \cO_\cX) \in \Sch(\cGetkder)$, the functor $\phi(X)$
\[ \phi(X) \colon \mathrm{CAlg}_k \to \cS \]
is defined informally by
\[ \phi(X)(A) = \Map_{\Sch(\cGetkder)}(\SpecEt(A), X) . \]
It follows from \cite[Lemma 2.4.13]{DAG-V} that this functor factors through $\Sh(\dAff_k, \tauet)$.

To obtain the functor $\Phi$ of \cref{thm:comparison}, we are left to show that the restriction of $\phi$ to $\Sch_{\mathrm{loc}}(\cGetkder)$ factors through $\mathbf{DM}$.
Let $X = (\cX, \cO_{\cX}) \in \mathrm{Sch}(\cGetkder)$. More specifically, the proof of \cref{thm:comparison} breaks into the following independent step:
\begin{enumerate}
	\item Let $n \ge 1$. If the underlying $\infty$-topos of $X$ is $n$-localic, then $\phi(X)$ is hypercomplete;
	\item Let $n \ge 1$. If the underlying $\infty$-topos of $X$ is $(n+1)$-localic, then $\phi(X)$ is $n$-geometric;
	\item The previous two points imply that $\phi$ factors through a fully faithful functor $\Phi \colon \Sch(\cGetkder) \to \mathbf{DM}$. Therefore, to achieve the proof, it will be sufficient to show that every object in $\mathbf{DM}$ arises is of the form $\phi(X)$ for $X \in \Sch_{\mathrm{loc}}(\cGetkder)$.
\end{enumerate}

We will deal with the first point in \cref{subsec:hypercompleteness}. In \cref{subsec:derived_algebraic_spaces} we will discuss the special case of derived algebraic spaces, which will serve as base for the proof by induction of the second point given in \cref{subsec:geomericity}.
Finally, we will treat the third point in \cref{subsec:essential_surjectivity}, thus achieving the proof of \cref{thm:comparison}.

\subsection{Hypercompleteness} \label{subsec:hypercompleteness}

Let us begin by a couple of preliminary lemmas.

\begin{lem} \label{lem:topos_theoretic_etale}
	Let $f \colon B \to A$ be a morphism in $\mathrm{CAlg}_k$ between finitely presented objects.
	The following conditions are equivalent:
	\begin{enumerate}
		\item $f$ is \'etale;
		\item the morphism $\Spec^{\mathrm{\acute{e}t}}(A) \to \Spec^{\mathrm{\acute{e}t}}(B)$ is \'etale in the sense of \cite[Definition 2.3.1]{DAG-V}.
	\end{enumerate}
\end{lem}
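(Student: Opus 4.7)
The plan is to treat the two implications separately. The forward direction (1) $\Rightarrow$ (2) should be essentially formal from the construction of $\SpecEt$, while the reverse direction (2) $\Rightarrow$ (1) will require unpacking the DAG-V definition of \'etale morphism of $\cG$-structured topoi and combining it with étale descent at the level of $\mathrm{CAlg}_k$.

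For (1) $\Rightarrow$ (2): since $A$ and $B$ are finitely presented, they are compact objects of $\mathrm{CAlg}_k$ and hence correspond to objects of $\cGetkder^{\mathrm{op}}$. By the definition of the geometry structure on $\cGetkder$ recalled in \cref{eg:derived_DM_stacks}, a morphism between such compact objects is admissible precisely when it is (derived) \'etale. Thus $f$ is admissible in $\cGetkder$, and the general fact from \cite[§2.3]{DAG-V} that $\Spec^{\cG}$ carries admissible morphisms to \'etale morphisms of $\cG$-structured topoi (in the sense of \cite[Definition 2.3.1]{DAG-V}) yields the conclusion applied to $\cG = \cGetkder$.

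For (2) $\Rightarrow$ (1): I would unfold the DAG-V notion of étale morphism in the affine case. Since the underlying $\infty$-topos $\cX_A$ is built from the small admissible site $A_{\mathrm{adm}}$, which has as coverings the \'etale covers of $A$ in $\mathrm{CAlg}_k$, the condition that $\SpecEt(f)$ be \'etale in the sense of \cite[Definition 2.3.1]{DAG-V} translates to the existence of an \'etale cover $\{A \to A_i\}_{i \in I}$ in $\mathrm{CAlg}_k$ such that for every $i$ the composite $B \to A \to A_i$ is admissible in $\cGetkder$, i.e.\ \'etale. The conclusion then follows from étale descent for the property of being \'etale: if $\{A \to A_i\}$ is an \'etale cover and each $B \to A_i$ is \'etale, then $B \to A$ is \'etale. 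This descent statement splits into the two clauses of the definition of \'etale morphism given in \cref{subsec:HAG_II}: étaleness of $\pi_0(f)$ reduces to classical étale descent for discrete commutative rings, while the strongness condition follows from faithful flatness of the cover together with the compatibility of tensor products with $\pi_*$ on flat morphisms.

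The main obstacle I expect is the first step of the reverse direction: correctly translating the abstract structured-topos definition of \'etaleness into the concrete existence of an \'etale cover $\{A \to A_i\}$ of the base on which $f$ becomes locally admissible. Once that translation is carried out cleanly, both the forward direction and the descent step reduce to standard references in \cite{DAG-V} and to routine facts about \'etale morphisms in $\mathrm{CAlg}_k$.
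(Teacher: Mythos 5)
Your direction (1) $\Rightarrow$ (2) coincides with the paper's: it is the formal fact that $\Spec^{\cG}$ carries admissible morphisms to \'etale morphisms of structured topoi, which the paper simply cites as \cite[Example 2.3.8]{DAG-V}.

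The reverse direction, however, has a genuine gap, located exactly at the step you yourself flag as the main obstacle. By \cite[Definition 2.3.1]{DAG-V}, the morphism $\SpecEt(A) \to \SpecEt(B)$ is \'etale when (i) the underlying geometric morphism is \'etale, i.e.\ $\cX_A \simeq (\cX_B)_{/U}$ for some object $U \in \cX_B$, and (ii) the induced map $f\inv \cO_B \to \cO_A$ is an equivalence. This does \emph{not} unfold to ``there is an \'etale cover $\{A \to A_i\}$ on which $f$ becomes admissible''. The classifying object $U$ is an arbitrary sheaf on $B_\et$; covering it by representables $h_{B_i}$ produces \'etale $B$-algebras $B_i$ and maps $A \to B_i$ whose associated morphisms $\SpecEt(B_i) \to \SpecEt(A)$ are \'etale as structured topoi and jointly surjective, but whether these (or any refinement of them) are \'etale \emph{as algebra maps} is precisely the question the lemma is asking, one level down: an \'etale morphism of structured topoi between affine $\cG$-schemes need not be $\Spec$ of an admissible morphism in general (compare the Zariski geometry, where an affine open that is not a basic open gives such an example). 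So the proposed reduction does not terminate without further input, and your descent step --- which is a correct statement about \'etaleness being local on the source --- never receives the hypotheses it needs. The paper sidesteps this circle entirely: it uses only clause (ii), $f\inv \cO_B \simeq \cO_A$, to see that the sheaf $C \mapsto \mathbb L_{C/f\inv\cO_B(C)}$ on $A_{\et}$ vanishes; a stalkwise computation (the stalks of $f\inv \cO_B$ are strictly henselian, hence formally \'etale, $B$-algebras) identifies this sheaf with $C \mapsto \mathbb L_{C/B}$, whence $\mathbb L_{A/B} \simeq 0$; combined with the finite presentation of $A$ and $B$, this yields that $f$ is \'etale. If you want to rescue your outline, you would first need a recognition theorem for \'etale maps of affine $\cGetkder$-schemes; the cotangent-complex argument is the cleaner way in.
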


\begin{proof}
	A proof of this lemma can be formally deduced from \cite[Theorem 1.2.1]{DAG-VIII}.
	We will present here a shorter proof that works fine in the connective situation.
	The implication $1. \Rightarrow 2.$ is \cite[Example 2.3.8]{DAG-V}.
	Let us prove $2. \Rightarrow 1.$
	Since both $A$ and $B$ are finitely presented, we see that $\pi_0(A) \to \pi_0(B)$ is finitely presented.
	If we show that $\mathbb L_{A / B} \simeq 0$, we will obtain that $B \to A$ is finitely presented (in virtue of \cite[Proposition 8.8]{DAG-IX}\footnote{We warn the reader that there is a small mistake in \cite[Example 8.4]{DAG-IX}, when considering morphism of finite presentation to order $0$. Namely, it is not true that a discrete $A$-algebra $B$ is finitely generated if the canonical map $\colim \Hom_A(B, C_\alpha) \to \Hom_A(B,\colim C_\alpha)$ is injective for every filtered diagram $\{C_\alpha\}$ of $A$-algebras, the easiest counterexample being $A = \mathbb Z$ and $B = \mathbb Q$. However, the converse is true, and this is what is used afterwards. Therefore the subsequent results are not affected by this.}) and \'etale.
	
	Let
	\[ f\inv \colon \Sh(A_\et, \tauet) \to \Sh(B_\et, \tauet) \]
	be the inverse image functor.
	Consider the sheaf $\mathbb L_{\cO_A / f\inv \cO_B}$ on $A_{\mathrm{\acute{e}t}}$ defined by
	\[ C \mapsto \mathbb L_{\cO_A(C) / f\inv \cO_B(C)} = \mathbb L_{C / f\inv \cO_B(C)} \]
	Since the morphism $\Spec^{\mathrm{\acute{e}t}}(A) \to \Spec^{\mathrm{\acute{e}t}}(B)$ is \'etale, we see that $f\inv \cO_B \simeq \cO_A$.
	Therefore this sheaf is identically zero.
	
	On the other side, if $\eta\inv \colon \Sh(A_{\mathrm{\acute{e}t}}, \tauet) \to \cS$ is a geometric point, then
	\[ \eta\inv(\mathbb L_{\cO_A / f\inv \cO_B}) \simeq \mathbb L_{\eta\inv \cO_A / \eta\inv f\inv \cO_B} \]
	We can identify $\eta\inv f\inv \cO_B$ with a strictly henselian $B$-algebra $B'$.
	Since the map $B \to B'$ is formally \'etale, we conclude that
	\[ \mathbb L_{\eta\inv \cO_A / \eta\inv f\inv \cO_B} \simeq \mathbb L_{\eta\inv \cO_A / B} \]
	This is also the stalk of the sheaf on $A_{\mathrm{\acute{e}t}}$ defined by
	\[ C \mapsto \mathbb L_{C / B} \]
	Therefore, this sheaf vanishes as well. In particular, $\mathbb L_{A / B} \simeq 0$, completing the proof.
\end{proof}

\begin{lem} [{\cite[Lemma 1.3.5]{DAG-VIII}}] \label{lem:n_topoi}
	Let $\Top_{\le n}$ be the full subcategory of $\RTop$ spanned by $n$-localic $\infty$-topoi.
	Then $\Top_{\le n}$ is categorically equivalent to an $(n+1)$-category.
\end{lem}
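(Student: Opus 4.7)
The plan is to show that for $n$-localic $\infty$-topoi the mapping spaces in $\RTop$ are $n$-truncated, and combine this with the fact that there is a natural way to exhibit $\Top_{\le n}$ as a (homotopy-theoretic) truncation of $\RTop$. Concretely, I would first recall the characterization from \cite[§6.4.5]{HTT}: an $\infty$-topos $\cX$ is $n$-localic if and only if the full subcategory $\tau_{\le n-1}\cX \subseteq \cX$ of $(n-1)$-truncated objects is an $n$-topos whose inclusion extends to an equivalence $\Shv(\tau_{\le n-1}\cX) \simeq \cX$ (where one takes sheaves for the canonical topology on the $n$-site $\tau_{\le n-1}\cX$). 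In particular an $n$-localic topos is recovered from an $n$-categorical datum, namely its underlying $n$-topos.

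Next I would use the universal property of $\Shv(\cC)$ for an $n$-site $\cC$: a geometric morphism $\cX \to \Shv(\cC)$ is the same as a left-exact, cover-preserving functor $\cC \to \cX$. Applying this with $\cX$ itself $n$-localic and replacing $\cX$ on the left by $\tau_{\le n-1}\cX$, the space $\Map_{\RTop}(\cX, \cY)$ becomes identified with a space of left-exact, continuous functors between $n$-categories (the underlying $n$-topoi). Since $n$-categories are closed under $\Fun$ in the sense that the mapping space $\Map_{\Fun(\cA,\cB)}(F,G)$ between functors of $n$-categories is $(n-1)$-truncated, and the space of equivalences from $F$ to $G$ is therefore $n$-truncated, we conclude that $\Map_{\Top_{\le n}}(\cX,\cY)$ is $n$-truncated. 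By \cite[§2.3.4]{HTT}, an $\infty$-category all of whose mapping spaces are $n$-truncated is equivalent to an $(n+1)$-category, so $\Top_{\le n}$ is equivalent to an $(n+1)$-category.

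Finally, to complete the argument rigorously I would package the above into an explicit equivalence: construct the functor $\Top_{\le n} \to \cC\mathrm{at}_{\infty}^{(n+1)\text{-cat}}$ sending $\cX \mapsto \tau_{\le n-1}\cX$ (together with its canonical topology), show it is fully faithful using the universal property above, and observe that its essential image consists of Grothendieck $n$-topoi, which form an $(n+1)$-category by standard arguments on $\Fun$ of $n$-categories. The composition then exhibits $\Top_{\le n}$ as equivalent to an honest $(n+1)$-category.

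The main obstacle I would expect is keeping the truncation bookkeeping straight: one must carefully distinguish the $n$-categorical structure of $\tau_{\le n-1}\cX$ from the fact that morphisms in $\RTop$ are themselves higher-categorical, and verify that the passage to left-exact continuous functors between $n$-topoi does not introduce any higher homotopy. The rest is essentially formal once \cite[6.4.5.7]{HTT} is in hand.
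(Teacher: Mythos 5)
Your proposal is correct and follows essentially the same route as the paper: both reduce $\Map_{\Top_{\le n}}(\cX,\cY)$ to a mapping space between the underlying $n$-topoi $\tau_{\le n-1}\cX$ and $\tau_{\le n-1}\cY$ (this is exactly what $n$-localic means), embed that into the core of $\Fun(\tau_{\le n-1}\cX,\tau_{\le n-1}\cY)$, which is an $n$-category by \cite[2.3.4.8]{HTT} and hence has $n$-truncated core by \cite[2.3.4.19]{HTT}, and conclude via \cite[2.3.4.18]{HTT}. Your closing step of packaging this into an explicit equivalence with a category of $n$-topoi is more than the paper does but is not needed, since $n$-truncatedness of all mapping spaces already gives the conclusion.
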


\begin{proof}
	Let $n\textrm{-} \Top$ be the $\infty$-category spanned by $n$-topoi (see \cite[§6.4]{HTT}).
	Using the definition of $n$-localic $\infty$-topos we see that for $\cX, \cY \in \Top_{\le n}$, we have
	\[ \Map_{\Top_{\le n}}(\cX, \cY) \simeq \Map_{n \textrm{-} \Top}(\tau_{\le n-1} \cX, \tau_{\le n - 1} \cY) \to \Fun(\tau_{\le n - 1} \cX, \tau_{\le n - 1} \cY) \]
	Now, \cite[2.3.4.18]{HTT} shows that $\tau_{\le n - 1} \cY$ is (categorically equivalent to) an $n$-category, and therefore the simplicial set $\Fun(\tau_{\le n - 1} \cX, \tau_{\le n - 1} \cY)$ is (categorically equivalent to) an $n$-category as well in virtue of \cite[2.3.4.8]{HTT}. Invoking \cite[2.3.4.19]{HTT}, we conclude that the maximal Kan complex contained in $\Fun(\tau_{\le n - 1} \cX, \tau_{\le n - 1} \cY)$ is $n$-truncated.
	Since the map
	\[ \Map_{\Top_n}(\tau_{\le n-1} \cX, \tau_{\le n - 1} \cY) \to \Fun(\tau_{\le n - 1} \cX, \tau_{\le n - 1} \cY) \]
	is a monomorphism of simplicial sets, we see that the Kan complex
	\[ \Map_{\Top_n}(\tau_{\le n-1} \cX, \tau_{\le n - 1} \cY) \]
	is in fact an $n$-category.
	It follows again from \cite[2.3.4.19]{HTT} that it is $n$-truncated as well. In other words, $\Top_{\le n}$ is categorically equivalent to an $(n+1)$-category.
\end{proof}

\begin{prop} \label{prop:phi_X_hypercomplete}
	Let $X = (\cX, \cO_{\cX})$ be a $\cGetkder$-scheme and suppose that $\cX$ is $n$-localic, with $n \ge 1$.
	Then the functor $\phi(X) \colon \cC \to \cS$ is an hypercomplete sheaf.
\end{prop}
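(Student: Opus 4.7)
The plan is to combine Lemma \ref{lem:n_topoi} with a Postnikov-tower analysis of the structure sheaf component. Since $\phi(X)$ has already been shown to factor through $\Sh(\dAff_k, \tauet)$, it is enough to verify that for every étale hypercover $A_\bullet \to A$ in $\mathrm{CAlg}_k$ the canonical map $\phi(X)(A) \to \lim_{[m] \in \Delta}\phi(X)(A_m)$ is an equivalence; bounded truncated sheaves are automatically hypercomplete and stability of hypercompletion under limits will do the rest.

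I would first unfold the mapping space: because $\Sch(\cGetkder)$ sits as a full subcategory of $\Top(\cGetkder)$, the space $\phi(X)(A) = \Map_{\Top(\cGetkder)}((\cX_A, \cO_A), (\cX, \cO_\cX))$ fibers over $\Map_{\RTop}(\cX_A, \cX)$, with fiber over a geometric morphism $f$ given by the space of local transformations $f\inv \cO_\cX \to \cO_A$ in $\Strloc_{\cGetkder}(\cX_A)$. This decomposes the problem into verifying hypercompleteness of the two pieces separately, as functors of $A$.

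For the geometric-morphism piece, I would note that $\cX_A = \Sh(A_\et, \tauet)$ is $1$-localic (the small étale site $A_\et$ is a $1$-category, and in fact only depends on $\pi_0(A)$), hence lies in $\Top_{\le n}$ for every $n \ge 1$. Lemma \ref{lem:n_topoi} then identifies $\Map_{\RTop}(\cX_A, \cX)$ with an $n$-truncated space depending only on $\tau_{\le n-1}\cX_A$; consequently the functor $A \mapsto \Map_{\RTop}(\cX_A, \cX)$ factors through $\pi_0 \colon \mathrm{CAlg}_k \to \mathrm{CRing}_k$. On discrete commutative rings, $\pi_0$ of an étale hypercover is a classical étale hypercover, and $n$-truncated étale sheaves on ordinary affine schemes are automatically hypercomplete, so this component is already a hypersheaf.

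For the structure-sheaf piece, I would exploit the Postnikov tower $A \simeq \lim_k \tau_{\le k}A$, which upon applying $\Spec^\et$ yields $\cO_A \simeq \lim_k \tau_{\le k}\cO_A$ inside $\Strloc_{\cGetkder}(\cX_A)$. Each $\Map(f\inv \cO_\cX, \tau_{\le k}\cO_A)$ is bounded truncated in a range depending on $k$ and $n$ (again because $\cX$ is $n$-localic), so the corresponding presheaf in $A$ is hypercomplete, and the limit over $k$ recovers the full structure-sheaf fiber as a hypersheaf. Combining with the previous paragraph through the fibration over $\Map_{\RTop}(\cX_A, \cX)$ then gives the hypercompleteness of $\phi(X)$.

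The main obstacle is the one flagged in the remark preceding the proof: the topos $\cX_A$ is typically not hypercomplete, so one cannot simply lift a hypercover of $\Spec(A)$ into $\cX_A$ and appeal to direct descent for $\cO_\cX$. The $n$-localic hypothesis on $\cX$ is precisely what circumvents this obstruction, by collapsing the relevant information in $\cX_A$ to its $(n-1)$-truncated part; verifying that the Postnikov filtration of $\cO_A$ interacts well with the local transformation condition, so that the bounded-truncated pieces genuinely assemble to the correct mapping space, is what I expect to be the most delicate technical point.
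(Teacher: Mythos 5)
Your proposal is essentially the paper's own argument in different packaging: the paper proves that $\SpecEt(U^\bullet)\to\SpecEt(U)$ is a colimit diagram in the $(n+1)$-category $\Top_{\le n}(\cGetkder)$ and then maps into $X$, which after unwinding the mapping spaces of structured topoi is exactly your fibration of $\phi(X)(A)$ over $\Map_{\RTop}(\cX_A,\cX)$ with fibers the spaces of local transformations; the key inputs (\cref{lem:n_topoi} to reduce hyperdescent to \v{C}ech descent via truncatedness, descent for $\infty$-topoi for the base, and hypercompleteness of the structure sheaf for the fibers) coincide. The one step you should not treat as formal is $\cO_A \simeq \lim_k \tau_{\le k}\cO_A$: since $\cX_A$ is not hypercomplete, Postnikov towers there need not converge, and this convergence is precisely \cite[Theorem 4.3.32.(3)]{DAG-V}, which is the citation the paper uses at the corresponding point; likewise the ordinary (\v{C}ech) sheaf condition for your two pieces still has to be supplied (via \cite[6.1.3.9]{HTT} for the underlying topoi) --- truncatedness only upgrades descent to hyperdescent, it does not produce descent.
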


\begin{proof}
	Let $U^\bullet \to U$ be an \'etale hypercover in the category $\dAff_k$.
	Let $\Top_{\le n}(\cGetkder)$ be the $\infty$-category of $\cGetkder$-structured $\infty$-topoi which are $m$-localic for some $m \le n$.
	We claim that the geometric realization of the simplicial object $\SpecEt(U^\bullet)$ is $\Top_{\le n}(\cGetkder)$ is precisely $\SpecEt(U)$.
	The claim implies directly the lemma, since
	\begin{align*}
		\phi(X)(\SpecEt(U)) & = \Map_{\mathrm{Sch}(\cTetk)}(\SpecEt(U), X) \\
		& = \Map_{\Top_{\le n}(\cTetk)}(\SpecEt(U), X) \\
		& = \lim \Map_{\Top_{\le n}(\cTetk)}(\SpecEt(U^\bullet), X) \\
		& = \lim \phi(X)(\SpecEt(U^\bullet))
	\end{align*}

	We are therefore reduced to prove the claim. Let us denote by $\cX_U$ the topos of (non hypercomplete) sheaves on the small \'etale site of $U$.
	It follows from \cref{lem:topos_theoretic_etale} that each face map
	\[ \Spec^{\mathrm{\acute{e}t}}(U^n) \to \Spec^{\mathrm{\acute{e}t}}(U^{n-1}) \]
	is \'etale.
	Therefore, we can find objects $V^n \in \cX_U$ and identifications $\cX_{U^n} \simeq (\cX_U)_{/V^n}$.
	The universal property of \'etale subtopoi (see \cite[6.3.5.6]{HTT}), shows that we can arrange the $V^n$ into a simplicial object in $\cX_U$.
	At this point, we are reduced to show the following two statements:
	\begin{enumerate}
		\item in $\Top_{\le n}$ one has an equivalence $\cX_U \simeq \colim \cX_{U^\bullet}$;
		\item in $\cX_U$ one has an equivalence
		\[ \cO_U \simeq \varprojlim \cO_U |_{V^\bullet} \]
	\end{enumerate}
	Since $\Top_{\le n}$ is an $n$-category in virtue of \cref{lem:n_topoi}, \cref{prop:descent_vs_hyperdescent} shows that a presheaf with values in $\Top_{\le n}$ has descent if and only if it has hyperdescent. We are therefore reduced to the case where $U^\bullet$ is the \v{C}ech nerve of the map $U^0 \to U$.
	In this case, the general descent theory for $\infty$-topoi (see \cite[6.1.3.9]{HTT}) allows to conclude.
	As for the second statement, \cite[Theorem 4.3.32.(3)]{DAG-V} shows that the sheaf $\cO_U$ is hypercomplete as an object of $\cX_U$.
	The proof of the lemma is therefore achieved.
\end{proof}

\subsection{The case of algebraic spaces} \label{subsec:derived_algebraic_spaces}

Let $A \in \mathrm{CAlg}_k$.
We denote by $A_{\text{big, \'et}}$ the big \'etale site of $A$: that is, its underlying $\infty$-category is the opposite of $(\mathrm{CAlg}_k)_{A/}$, and the Grothendieck topology is the (derived) \'etale one.
There are continuous and cocontinuous morphisms of $\infty$-sites
\[ \begin{tikzcd}
(A_{\mathrm{\acute{e}t}}, \tauet) \arrow{r}{u} & (A_{\text{big, \'et}}, \tauet) \arrow{r}{v} & (\dAff_k, \tauet)
\end{tikzcd} \]
Observe that $u$ commutes with finite limits.
It follows (e.g.\ using \cite[6.1.5.2]{HTT}) that the induced adjunction
\[ u_s \colon \Sh(A_{\text{\'et}}, \tauet) \rightleftarrows \Sh(A_{\text{big, \'et}}, \tauet) \colon u^s \]
is a geometric morphism of $\infty$-topoi, in other words, $u_s$ commutes with finite limits.
Here $u^s$ denotes the restriction functor along $u$ and $u_s$ is obtained via the left Kan extension along $u$. We refer the reader to \cite[§2.4]{Porta_Yu_Higher_analytic_stacks_2014} for a more detailed discussion of the chosen notations and more specifically to \cite[Lemma 2.23]{Porta_Yu_Higher_analytic_stacks_2014} for the construction of the relevant adjunction.

In particular, we can use \cite[5.5.6.16]{HTT} to conclude that $u_s$ takes $n$-truncated objects to $n$-truncated objects.

This is not true for $v$, because it commutes only with connected limits.
However, we still have an adjunction
\[ v_s \colon \Sh(A_{\text{big, \'et}}, \tauet) \rightleftarrows \Sh(\dAff_k, \tauet) \colon v^s \]
which can be identified with the canonical adjunction
\[ v_s \colon \Sh(\dAff_k, \tauet)_{/\Spec(A)} \rightleftarrows \Sh(\dAff_k, \tauet) \colon v^s \]
where $\Spec(A)$ denotes the functor of points associated to $A$, accordingly to the notation introduced at the end of \cref{subsec:HAG_II}.

\begin{defin} \label{def:derived_etale_algebraic_space}
	Let $k$ be a commutative ring, $A$ a commutative $k$-algebra and $X \in \Sh(\mathrm{dAff}_k, \tauet)$ any sheaf equipped with a natural transformation $\alpha \colon X \to \Spec(A)$. We will say that \emph{$\alpha$ exhibits $X$ as an \'etale algebraic space over $\Spec(A)$} if there exists a $0$-truncated sheaf $F \in \Sh(A_\et, \tauet)$ and an equivalence $X \simeq v_s(u_s(F))$ in $\Sh(\dAff_k, \tauet)_{/\Spec(A)}$.
\end{defin}

\begin{rem} \label{rem:algebraic_spaces}
	The above definition is the analogue of \cite[Definition 2.6.4]{DAG-V} in the derived setting.
	Indeed, let us replace the $\infty$-category $\mathrm{CAlg}_k$ with the $1$-category $\mathrm{CRing}_k$.
	Keeping the same notations as above, we see that if $G \in \Sh(A_{\text{big, \'et}}, \tauet)$ then
	\[ v_s(G) = \coprod_{\phi \colon A \to B} G(\phi) \]
	If moreover $F$ is an object in $\Sh(A_\et, \tauet)$, then $(u_sF)(\phi) = \phi\inv(F)(B)$.
	In conclusion, we have
	\[ v_s(u_s(F))(B) = \{(\phi, \eta) \mid \phi \in \Hom_k(A, B), \eta \in (\phi\inv F)(B)\} \]
	This coincides precisely with the definition of $\widehat{F}$ given in \cite[Notation 2.6.2]{DAG-V}.
	A similar description holds true in the derived setting. Indeed, there is a natural transformation $v_s(u_s(F)) \to \Spec(A)$.
	The fiber over a given map $f \colon \Spec(B) \to \Spec(A)$ coincides precisely with the global sections of the discrete object $f\inv(F)$.
\end{rem}

The following proposition is the analogue of \cite[2.6.20]{DAG-V}. The proof is essentially unchanged:

\begin{prop} \label{prop:algebraic_spaces}
	Let $\alpha \colon Y \to \Spec(A)$ be a natural transformation of stacks.
	Write $\Spec^{\mathrm{\acute{e}t}}(A) = (\cX, \cO_{\cX})$.
	The following conditions are equivalent:
	\begin{enumerate}
		\item $\alpha$ exhibits $Y$ as a derived algebraic space over $\Spec(A)$;
		\item $Y$ is representable by a $\cGetkder$-scheme $(\cY, \cO_{\cY})$ and $\alpha$ induces an equivalence $(\cY, \cO_{\cY}) \simeq (\cX_{/U}, \cO_{\cX} |_U)$ for some discrete object $U \in \cX$.
		\item the morphism $\alpha$ is $0$-truncated and $0$-representable by \'etale maps.
	\end{enumerate}
\end{prop}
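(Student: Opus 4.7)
The plan is to follow the strategy of \cite[2.6.20]{DAG-V}, adapting it to the derived setting; the essential input that makes this adaptation formal is \cref{rem:algebraic_spaces}, which identifies $F \mapsto v_s(u_s F)$ with the $\widehat{(-)}$ construction of \cite{DAG-V} on discrete sheaves. I will prove the cycle $(1) \Rightarrow (2) \Rightarrow (3) \Rightarrow (1)$.

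For $(1) \Rightarrow (2)$: given a discrete $F \in \Sh(A_\et, \tauet)$, identify $F$ with a discrete object $U$ of $\cX$ under the natural equivalence $\cX \simeq \Sh(A_\et, \tauet)$. I would then choose an effective epimorphism $\coprod V_i \to U$ where each $V_i$ is representable by an étale $A$-algebra $A_i$. Invoking the universal property of étale subtopoi \cite[6.3.5.6]{HTT} together with \cref{lem:topos_theoretic_etale}, I obtain equivalences $(\cX_{/V_i}, \cO_\cX|_{V_i}) \simeq \SpecEt(A_i)$, showing that $(\cX_{/U}, \cO_\cX|_U)$ is indeed a $\cGetkder$-scheme. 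The identification $\phi(\cX_{/U}, \cO_\cX|_U) \simeq v_s(u_s F)$ over $\Spec(A)$ is then computed fiberwise: the fiber over $f \colon \Spec(B) \to \Spec(A)$ on the topos side is the space of global sections of the pullback of $U$ to $\cX_B$, while on the functorial side it is exactly the description given in \cref{rem:algebraic_spaces}.

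For $(2) \Rightarrow (3)$: if $(\cY, \cO_\cY) \simeq (\cX_{/U}, \cO_\cX|_U)$ with $U$ discrete, the fiber of $\alpha$ over any $\Spec(B) \to \Spec(A)$ is the global sections of a discrete sheaf, hence $0$-truncated. Representability of the base change by étale maps follows since the pullback of $U$ under an étale site map remains a discrete object admitting a representable étale cover, and \cref{lem:topos_theoretic_etale} converts this back to an étale morphism of derived affines. For $(3) \Rightarrow (1)$: restricting $Y$ along the inclusion of the small étale site, the $0$-truncation hypothesis produces a discrete sheaf $F \in \Sh(A_\et, \tauet)_{\le 0}$. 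The comparison $v_s(u_s F) \simeq Y$ is then checked pointwise on affines $\Spec(B) \to \Spec(A)$: on the left one reads off $\pi_0$ of the pulled-back sheaf as in \cref{rem:algebraic_spaces}, and on the right the hypothesis of $0$-representability by étale maps ensures $Y(B)$ has the same description.

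The main obstacle is the functor-of-points computation in the first step, i.e.\ producing a clean equivalence $\phi(\cX_{/U}, \cO_\cX|_U) \simeq v_s(u_s F)$ in $\Sh(\dAff_k, \tauet)_{/\Spec(A)}$. This requires careful bookkeeping to match the small-site perspective (where $U$ lives in $\cX$) with the big-site perspective (where $v_s(u_s F)$ is defined), but once \cref{rem:algebraic_spaces} is in hand the remaining verifications are essentially those of \cite[2.6.20]{DAG-V}, with \cref{lem:topos_theoretic_etale} providing the crucial translation between the two notions of étaleness.
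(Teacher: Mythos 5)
Your overall strategy --- adapting \cite[2.6.20]{DAG-V} through \cref{rem:algebraic_spaces} and \cref{lem:topos_theoretic_etale} --- is the same as the paper's, and your treatments of $(1)\Rightarrow(2)$ and $(3)\Rightarrow(1)$ are essentially the arguments given there (both directions of $(1)\Leftrightarrow(2)$ are handled in the paper by \cite[Remark 2.3.4]{DAG-V}; your explicit verification that $(\cX_{/U},\cO_\cX|_U)$ is a $\cGetkder$-scheme is a reasonable supplement). The gap is in the direction that establishes condition (3) --- for you the step $(2)\Rightarrow(3)$, for the paper $(1)\Rightarrow(3)$ --- which is the only substantive part of the proposition. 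You assert that representability of the base change by \'etale maps ``follows since the pullback of $U$ \dots remains a discrete object admitting a representable \'etale cover.'' But a representable \'etale cover $\coprod \Spec(A_\alpha) \to Y$ only produces a \emph{candidate} atlas; $0$-representability of $\alpha$ additionally requires this atlas to be $(-1)$-representable, i.e.\ one must control the fiber products $\Spec(A_\alpha)\times_Y \Spec(A_\beta)$ at geometric level $-1$. Topos-theoretically these are the \'etale maps classified by $V_\alpha \times_U V_\beta$, which is only a $0$-truncated object mapping to the representable $V_\alpha\times V_\beta$ and need not itself be representable, so the existence of the cover alone does not close this step.

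The paper's proof devotes its longest portion precisely to this point: it proves the claim that the canonical morphism $Y_{\alpha,\beta} \coloneqq \Spec(A_\alpha)\times_{Y}\Spec(A_\beta) \to \Spec(A_\alpha\otimes_A A_\beta)$ is $(-1)$-truncated, by exhibiting $Y_{\alpha,\beta}$ as the pullback of the diagonal $Y \to Y\times_{\Spec(A)} Y$ along $\Spec(A_\alpha)\times_{\Spec(A)}\Spec(A_\beta)$ and using that the diagonal of the $0$-truncated morphism $\alpha$ is $(-1)$-truncated; together with the observation that $Y_{\alpha,\beta}$ is again a derived algebraic space \'etale over $A$, this yields the $(-1)$-representability of the atlas and hence the $0$-representability of $\alpha$. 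You need to supply this diagonal argument (or an equivalent bound on the fiber products of the atlas) for your $(2)\Rightarrow(3)$ to go through; the rest of your proposal is sound.
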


\begin{proof}
	We first prove the equivalence of (1) and (2).
	If $\alpha$ exhibits $Y$ as a derived algebraic space over $\Spec(A)$, we can find a $0$-truncated sheaf $U \in \Sh(A_{\mathrm{\acute{e}t}}, \tauet)$ and an equivalence $Y \simeq v_s(u_s(U))$ in $\Sh(\mathrm{dAff}, \tauet)_{/\Spec(A)}$.
	Now, \cite[Remark 2.3.4]{DAG-V} and \cref{rem:algebraic_spaces} show together that the functor represented by $(\cX_{/U}, \cO_{\cX} |_U)$ coincides with $Y$.
	Viceversa, if (2) is satisfied, then $U$ defines a derived algebraic space $v_s(u_s(U))$ over $\Spec(A)$, and \cite[Remark 2.3.4]{DAG-V} again allows to identify it with $Y$.
	
	Let us now prove the equivalence of (1) and (3) First, assume that (3) is satisfied.
	In this case, we can define a sheaf $U \colon A_{\mathrm{\acute{e}t}} \to \cS$ by sending an \'etale map $f \colon A \to B$ to the fiber product
	\[ \begin{tikzcd}
	U(B) \arrow{r} \arrow{d} & Y(B) \arrow{d}{\alpha_B} \\
	\{*\} \arrow{r}{f} & \Map(A,B)
	\end{tikzcd} \]
	Since $\alpha$ is $0$-truncated, we see that $U$ takes value in $\rSet$.
	Since it is obviously a sheaf, it defines a $0$-truncated object in $\Sh(A_{\mathrm{\acute{e}t}}, \tauet)$.
	\cite[Remark 2.3.4]{DAG-V} shows that $v_s(u_s(U))$ can be canonically identified with $Y$.
	
	Finally, let us prove that (1) implies (3).
	We already know that, in this situation, $\alpha$ is $0$-truncated.
	Choosing sections $\eta_\alpha \in Y(A_\alpha)$ which generate $Y$, we obtain an effective epimorphism
	\[ \coprod \Spec(A_\alpha) \to v_s(u_s(Y)) \]
	in $\Sh(\mathrm{dAff}_k, \tauet)$.
	Suppose that there exists a $(-1)$-truncated morphism $v_s(u_s(Y)) \to \Spec(B)$ for some $B \in \mathrm{CAlg}_k$.
	In this case, we see that
	\[ \Spec(A_\alpha) \times_{v_s(u_s(Y))} \Spec(A_\beta) \simeq \Spec(A_\alpha) \times_{\Spec(B)} \Spec(A_\beta) \simeq \Spec(A_\alpha \otimes_B A_\beta) \]
	In the general case, each fiber product $Y_{\alpha, \beta} \coloneqq \Spec(A_\alpha) \times_{v_s(u_s(Y))} \Spec(A_\beta)$ is again a derived algebraic space \'etale over $A$. We claim moreover that the canonical morphism $Y_{\alpha, \beta} \to \Spec(A_\alpha \otimes_A A_\beta)$ is $(-1)$-truncated.
	Assuming the claim, it follows that $Y_{\alpha, \beta} \to \Spec(A)$ is $(-1)$-representable by \'etale maps, hence it would follow that the morphism $\Spec(A_\alpha) \to v_s(u_s(Y))$ is $0$-representable.
	Finally, we see that it is representable by \'etale maps combining the equivalence between (1) and (2) with \cref{lem:topos_theoretic_etale}.
	
	We are left to prove the claim. Fix $f_\alpha \colon A_\alpha \to B$, $f_\beta \colon A_\beta \to B$ together with a homotopy making the diagram
	\[ \begin{tikzcd}
	A \arrow{r} \arrow{d} & A_\alpha \arrow{d}{f_\alpha} \\
	A_\beta \arrow{r}{f_\beta} & B
	\end{tikzcd} \]
	commutative.
	We have pullback squares
	\[ \begin{tikzcd}
	Y_{\alpha, \beta} \arrow{r} \arrow{d} & v_s(u_s(Y)) \arrow{d} \\
	\Spec(A_\alpha) \times \Spec(A_\beta) \arrow{r} & v_s(u_s(Y)) \times_{\Spec(A)} v_s(u_s(Y))
	\end{tikzcd} \]
	and since $\alpha \colon v_s(u_s(Y)) \to \Spec(A)$ is $0$-truncated, the statement follows.
\end{proof}

\subsection{$\phi(X)$ is geometric} \label{subsec:geomericity}

We can now prove that if $X \in \Sch_{\le n+1}(\cGetkder)$, then $\phi(X)$ is $n$-geometric.
The proof will go by induction, and \cref{prop:algebraic_spaces} will serve as basis of the induction.
Before doing that, however, it is convenient to prove the following lemma:

\begin{lem} \label{lem:decreasing_truncated_level}
	Let $n \ge 0$ be an integer. Fix $X = (\cX, \cO_\cX) \in \Sch_{\le n+1}(\cGetkder)$ and let $V \in \cX$ be an object such that $(\cX_{/V}, \cO_\cX |_V) \simeq \SpecEt(A)$ for some $A \in \mathrm{CAlg}_k$.
	Then $V$ is $n$-truncated.
\end{lem}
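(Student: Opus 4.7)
The plan is to prove that $V$ is $n$-truncated by analyzing the localic level of the slice topos $\cX_{/V}$ and comparing it with that of $\cX$. The key input is a general fact from \cite[§6.4.5]{HTT}: if $\cX$ is an $m$-localic $\infty$-topos and $U \in \cX$, then the étale slice $\cX_{/U}$ is again $m$-localic if and only if $U$ is $(m-1)$-truncated. I would apply this with $m = n+1$.

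First I would verify that $\SpecEt(A) = (\cX_A, \cO_A)$ is $1$-localic. The underlying $\infty$-topos $\cX_A$ is the $\infty$-topos of (non-hypercomplete) sheaves of spaces on the small admissible site $A_{\mathrm{\acute{e}t}}$. The key observation is that $A_{\mathrm{\acute{e}t}}$ is in fact equivalent to an ordinary $1$-category: an étale morphism $A \to B$ in $\mathrm{CAlg}_k$ is strong, so it is determined up to equivalence by the étale morphism $\pi_0(A) \to \pi_0(B)$ of discrete rings, and the mapping space between any two such $B, B'$ over $A$ reduces to the discrete $\Hom$-set $\Hom_{\pi_0(A)}(\pi_0(B), \pi_0(B'))$ (using that any $A$-algebra map between objects of $A_{\mathrm{\acute{e}t}}$ is automatically étale and thus strong). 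Hence $A_{\mathrm{\acute{e}t}}$ is a $1$-site, and by the standard characterization of $n$-localic $\infty$-topoi in terms of $n$-sites (\cite[6.4.5.1, 6.4.5.2]{HTT}), the $\infty$-topos $\cX_A$ is $1$-localic.

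Next, I would observe that any $1$-localic $\infty$-topos is automatically $(n+1)$-localic for every $n \ge 0$, because a $1$-site is a fortiori an $(n+1)$-site. Combined with the equivalence $(\cX_{/V}, \cO_\cX|_V) \simeq \SpecEt(A)$ provided by the hypothesis, this gives that $\cX_{/V}$ is $(n+1)$-localic.

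Finally, since $\cX$ is assumed to be $(n+1)$-localic and its slice $\cX_{/V}$ is also $(n+1)$-localic, the cited characterization of localic slices forces $V$ to be $n$-truncated, which is the desired conclusion. The main potential obstacle is locating and citing the exact form of the slice-versus-truncation statement in \cite{HTT}; once this reference is in hand, the proof is essentially a bookkeeping of localic levels, with the first step — recognizing $A_{\mathrm{\acute{e}t}}$ as a $1$-site even though $A$ itself is derived — being the only mildly non-formal ingredient.
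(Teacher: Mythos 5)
Your reduction of the lemma to a bookkeeping of localic levels does not work, because the biconditional you take as key input is not a theorem. What is true (and is essentially \cite[Lemma 2.3.16]{DAG-V}, used elsewhere in this paper) is one implication, with a different indexing: if $\cX$ is $m$-localic and $U$ is $m$-truncated, then $\cX_{/U}$ is $m$-localic. The converse implication --- the one your argument actually needs --- fails. Concretely, take $\cX = \cS$, which is $0$-localic and hence $(n+1)$-localic for every $n \ge 0$, and $V = K(\mathbb Z, n+1)$. Then $\cX_{/V} \simeq \Fun(K(\mathbb Z, n+1), \cS)$ is $(n+1)$-localic (presheaves on an $(n+1)$-groupoid), yet $V$ is not $n$-truncated. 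Even the indexing in your statement is off: for $V$ a two-element set, $\cS_{/V} \simeq \cS \times \cS$ is $0$-localic while $V$ is not $(-1)$-truncated. The underlying issue is that ``$V$ is $n$-truncated'' is a property of the morphism $V \to \mathbf 1_\cX$, i.e.\ of its fibers, and this relative datum is simply not visible from the absolute localic levels of $\cX$ and $\cX_{/V}$. Your argument only ever uses that $\cX_{/V} \simeq \cX_A$ is $1$-localic (that part is correct, and your verification that $A_\et$ is a $1$-site is fine), but the example $\cS_{/BG} \simeq \Fun(BG, \cS)$, which is $1$-localic while $BG$ is not $0$-truncated, shows that no statement of this shape can even recover the case $n = 0$.

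The missing ingredient is the affineness of $(\cX_{/V}, \cO_\cX|_V)$, not merely the localic level of its underlying $\infty$-topos. The paper's proof reduces to the truncation $(\cX, \pi_0 \cO_\cX)$ and passes to functors of points on discrete rings: $F_X(B)$ is $(n+1)$-truncated because $\cX$ is $(n+1)$-localic (\cite[Lemma 2.6.19]{DAG-V}), while $F_V(B) = \Hom_k(\pi_0(A), B)$ is \emph{discrete} precisely because the slice over $V$ is a spectrum of a ring; the long exact sequence of homotopy groups for the fibration $F_V(B) \to F_X(B)$ then shows that its fibers are $n$-truncated, which is equivalent to $V$ being $n$-truncated. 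Any correct proof has to use that $F_V$ is $0$-truncated, which is strictly stronger information than $\cX_{/V}$ being $1$-localic.
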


\begin{proof}
	We start by replacing $X$ with $\mathrm t_0(X) \coloneqq (\cX, \pi_0 \cO_X)$, which is a $\cGetk$-scheme in virtue of \cite[Corollary 4.3.30]{DAG-V}.
	We can therefore replace $A$ by $\pi_0(A)$ (observe also that $\SpecEt(\pi_0(A)) \simeq \Spec^{\cGetk}(\pi_0(A))$).
	
	Let us denote by $F_X \colon \mathrm{CRing}_k \to \cS$ the (truncated) functor of points associated to $F$.
	Similarly, let $F_V \colon \mathrm{CRing}_k \to \cS$ be the functor of points associated to $(\cX_{/V}, \cO_\cX |_V)$.
	The hypothesis shows that $F_V$ is nothing but the functor of points associated to $\pi_0(A)$ (with the notations of \cite{HAG-II}, this would be $\mathrm t_0( \Spec(\pi_0(A)) )$).
	Reasoning as in the proof of \cite[Theorem 2.6.18]{DAG-V}, we see that to prove that $V$ is $n$-truncated is equivalent to prove that for every (discrete) $k$-algebra $B$ the fibers of $F_V(B) \to F_X(B)$ are $n$-truncated.
	\cite[Lemma 2.6.19]{DAG-V} shows that $F(B)$ is $(n+1)$-truncated for every $k$-algebra $B$.
	On the other side, $F_V(B)$ is discrete by hypothesis.
	It follows from the long exact sequence of homotopy groups that the fibers of $F_V(B) \to F_X(B)$ are $n$-truncated, thus completing the proof.
\end{proof}

\begin{prop} \label{prop:phi_X_geometric}
	Let $X = (\cX, \cO_X) \in \Sch(\cGetkder)$ and suppose that $\cX$ is $n$-localic for $n \ge 1$.
	Then the stack $\phi(X)$ is $n$-geometric and moreover its truncation $\mathrm t_0 (\phi(X))$ is $n$-truncated.
\end{prop}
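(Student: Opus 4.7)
The natural approach is induction on $n$, with the base case reducing to \cref{prop:algebraic_spaces} and the inductive step using \cref{lem:decreasing_truncated_level}.

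For the base case $n = 1$, fix an affine atlas $\{U_i\}$ of $\cX$ with $(\cX_{/U_i}, \cO_\cX |_{U_i}) \simeq \SpecEt(A_i)$. By \cref{lem:decreasing_truncated_level} applied at parameter $0$, each $U_i$ is discrete. Since $\phi$ is fully faithful and commutes with fiber products (being essentially Yoneda, by \cite[Theorem 2.4.1]{DAG-V}), every base change $\Spec(A_i) \times_{\phi(X)} \Spec(B)$ is naturally identified with $\phi$ applied to $\SpecEt(A_i) \times_X \SpecEt(B)$. The latter has underlying topos $(\cX_B)_{/V}$, where $V \in \cX_B$ is the pullback of $U_i$ along the geometric morphism $f \colon \cX_B \to \cX$; left exactness of $f\inv$ keeps $V$ discrete. \cref{prop:algebraic_spaces} then recognizes this fiber as a derived algebraic space over $\Spec(B)$ whose structure map is étale and $0$-truncated. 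Combined with the effective epimorphism $\coprod U_i \to \mathbf{1}_\cX$, which transports through $\phi$ thanks to the hyperdescent established in \cref{prop:phi_X_hypercomplete}, this exhibits $\coprod \Spec(A_i) \to \phi(X)$ as an étale atlas with $0$-representable legs, so $\phi(X)$ is $1$-geometric. The $1$-truncation of $\mathrm t_0 \phi(X)$ is then a direct consequence of \cref{lem:n_topoi}: for discrete $B$, both $\SpecEt(B)$ and $X$ have $1$-localic underlying topoi, so the relevant mapping space is $1$-truncated.

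The inductive step proceeds along the same lines: for $X \in \Sch_{\le n+1}(\cGetkder)$ the covering objects $U_i$ are now $n$-truncated by \cref{lem:decreasing_truncated_level}, so the pullback $V \in \cX_B$ is $n$-truncated as well. The key topos-theoretic claim is that slicing the $1$-localic topos $\cX_B$ by an $n$-truncated object produces an $n$-localic topos; granting this, $\SpecEt(A_i) \times_X \SpecEt(B)$ is an $n$-localic $\cGetkder$-scheme, and the inductive hypothesis identifies $\phi$ of it with an $n$-geometric stack. Hence each leg of the cover is $n$-representable and étale, and $\phi(X)$ is $(n+1)$-geometric. The $(n+1)$-truncation statement is obtained again from \cref{lem:n_topoi}.

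The main technical obstacle is precisely the topos-theoretic locality claim invoked in the inductive step: that $n$-truncation of $V$ suffices to keep $(\cX_B)_{/V}$ within the $n$-localic range, rather than ending up merely $(n+1)$-localic. Without this drop the induction collapses, so this statement (which should be extractable from \cite[§6.4.5]{HTT}) is the technical heart of the argument.
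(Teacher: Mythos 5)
Your proposal follows the same strategy as the paper's proof: induction on the localic degree, with \cref{prop:algebraic_spaces} as the base case, \cref{lem:decreasing_truncated_level} driving the inductive step, and the reduction of representability of the atlas to the fiber products via full faithfulness and limit-preservation of $\phi$. The one claim you explicitly leave open --- that slicing the $1$-localic topos $\cX_B$ at an $n$-truncated object yields an $n$-localic rather than merely $(n+1)$-localic topos --- is exactly what the paper invokes at the corresponding point, citing \cite[Lemma 2.3.16]{DAG-V}; you are right that the induction does not close without it, but it is a quotable result rather than a new difficulty. Two of your auxiliary justifications are weaker than what is needed. First, the fact that $\coprod \Spec(A_i) \to \phi(X)$ is an effective epimorphism does not follow from the hypercompleteness statement of \cref{prop:phi_X_hypercomplete}; the paper deduces it from \cite[Lemma 2.4.13]{DAG-V}, which asserts that sections of $\phi(X)$ lift \'etale-locally through the atlas. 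Second, deducing the $n$-truncatedness of $\mathrm t_0(\phi(X))$ from \cref{lem:n_topoi} only controls the space of underlying geometric morphisms: a mapping space in $\Sch(\cGetkder)$ also carries the datum of a local transformation of structure sheaves, so an extra argument is required (the fibers over a given geometric morphism are spaces of maps into the discrete object $\cO_B$, hence $0$-truncated, which saves your argument; the paper instead simply cites \cite[Lemma 2.6.19]{DAG-V}). Neither point changes the architecture, which is essentially identical to the paper's.
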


\begin{proof}
	The fact that $\mathrm t_0(\phi(X))$ is $n$-truncated follows directly from \cite[Lemma 2.6.19]{DAG-V}.
	
	Suppose now that $X = (\cX, \cO_\cX)$ is an $n$-localic $\cGetkder$-scheme.
	By definition, we can find a collection of objects $V_i \in \cX$ such that:
	\begin{enumerate}
		\item the morphism $\coprod V_i \to \mathbf 1_{\cX}$ is an effective epimorphism;
		\item the $\cGetkder$-schemes $(\cX_{/V_i}, \cO_X |_{V_i})$ are equivalent to $\Spec^{\mathrm{\acute{e}t}}(U_i)$ for $U_i \in \dAff_k$, and each $U_i$ is of finite presentation.
	\end{enumerate}
	Set $V \coloneqq \coprod V_i$.
	By functoriality, we obtain a map
	\[ \coprod \phi(V_i) \to \phi(X) \]
	We only need to show that this map is $(n-1)$-representable by \'etale morphisms and that it is an effective epimorphism.
	The second statement is an immediate consequence of \cite[Lemma 2.4.13]{DAG-V}.
	
	Suppose first that $X \simeq \SpecEt(A)$. In this case, the universal property of $\Spec^{\mathrm{\acute{e}t}}$ proved in \cite[§2.2]{DAG-V} shows that $\phi(X) = \Spec(A)$, and therefore $\phi(X)$ is $(-1)$-geometric.
	Now suppose that $X$ is a general $n$-localic $\cGetkder$-scheme.
	Since $\phi$ commutes with fiber products and is fully faithful, we see that for every map $\Spec(B) = \phi( \SpecEt(B) ) \to X$, one has
	\[ \Spec(B) \times_{\phi(X)} \phi(V_i) \simeq \phi(\SpecEt(B) \times_{(\cX, \cO_X)} (\cX_{/V_i}, \cO_X |_{V_i} )) \]
	Let $(f_*, \varphi) \colon \SpecEt(B) \to (\cX, \cO_X)$ be the given map.
	Then the fiber product $\SpecEt(B) \times_{(\cX, \cO_X)} (\cX_{/V_i}, \cO_X |_{V_i} )$ is the \'etale map to $\SpecEt(B)$ classified by the object $f\inv(V_i) \in \cX_A$, as it easily follows from \cite[6.3.5.8]{HTT}.

	We will complete the proof proving by induction on $n$ that each morphism $\phi(\cX_{/V_i}, \cO_\cX |_{V_i}) \to \phi(X)$ is $(n-1)$-representable by \'etale maps.
	If $n = 1$, \cref{lem:decreasing_truncated_level} shows that each object $V_i$ is $0$-truncated.
	It follows from \cref{prop:algebraic_spaces} that the fiber product $\Spec(A) \times_{\phi(X)} \phi(V_i)$ is $0$-geometric.
	Therefore, $\phi(X)$ is $1$-geometric.
	Now suppose that $\cX$ is $n$-localic for $n > 1$.
	\Cref{lem:decreasing_truncated_level} again shows that each $V_i$ is $(n-1)$-truncated, and therefore \cite[Lemma 2.3.16]{DAG-V} shows that the underlying $\infty$-topos of
	\[ \Spec^{\mathrm{\acute{e}t}}(A) \times_{(\cX, \cO_X)} (\cX_{/V_i}, \cO_X |_{V_i} ) \]
	is $(n-1)$-localic.
	The inductive hypothesis shows therefore that its image via the functor $\phi$ is $(n-1)$-geometric, and that the map to $\Spec(A)$ is \'etale.
	The proof is therefore complete.	
\end{proof}

\subsection{Essential surjectivity} \label{subsec:essential_surjectivity}

We finally prove that $\phi$ is essentially surjective.
Let $X \in \mathbf{DM}$ be $n$-geometric and suppose that $\mathrm t_0(X)$ is $n$-truncated.
It follows that the small \'etale site $(t_0(X))_{\mathrm{\acute{e}t}}$ is equivalent to an $n$-category.
Recall that there is an equivalence of $\infty$-categories
\[ X_\et \leftrightarrows (\mathrm t_0(X))_\et \]
(one can proceed as in \cite[Proposition 3.16]{Porta_GAGA_2015} using as base of the induction \cite[Corollary 2.2.2.10]{HAG-II}).
We conclude that $X_\et$ is an $n$-category.
In particular, the $\infty$-topos $\cX \coloneqq \Sh(X_\et, \tauet)$ is $n$-localic.
Define a $\cGetkder$-structure on $\cX$ as follows. Introduce the functor
\[ \cGetkder \times (X_\et)^{\mathrm{op}} \to \cS \]
defined as
\[ (U, V) \mapsto \Map_{\dAff_k}(V, U) \]
Fix $U \in \cGetkder$.
Since the Grothendieck topology on $\dAff_k$ is hyper-subcanonical, we see that the resulting object of $\Fun((X_{\mathrm{\acute{e}t}})^{\mathrm{op}}, \cS)$ is a hyper-sheaf.
In particular, we obtain a well defined functor
\[ \cO_X \colon \cTet \to \Sh(X_{\mathrm{\acute{e}t}}, \tauet) \]
that in fact factors through hypercompletion of this category.
In order to show that it is a $\cTet$-structure, we only need to check the following statements:
\begin{enumerate}
	\item $\cO_X$ is left exact;
	\item $\cO_X$ takes $\tauet$-coverings to effective epimorphisms.
\end{enumerate}
Since limits in $\Sh(X_{\mathrm{\acute{e}t}}, \tauet)$ are computed objectwise, the first two statements follow directly from the definition of $\cO_X$.
We are left to show that $\cO_X$ takes $\tauet$-coverings to effective epimorphisms.
Let $\{U_i \to U\}$ be a $\tauet$-cover in $\cTetk$.
We have to show that the morphism
\[ \coprod \cO_X(U_i) \to \cO_X(U) \]
is an effective epimorphism. In other words, we have to show that
\[ \coprod \pi_0 \cO_X(U_i) \to \pi_0 \cO_X(U) \]
is an epimorphism of sheaves of sets.
If $V \in X_\et$, an element in $(\pi_0 \cO_X(U))(V)$ is an \'etale covering $V_j \to V$ plus morphisms $V_j \to U$.
For each index $j$, we can find an \'etale covering $W_{jl} \to V_j$ such that the morphism $W_{jl} \to U$ factors through the cover $U_i \to U$.
Therefore, up to refining the cover $V_j \to V$, we see that the element in $(\pi_0 \cO_X(U))(V)$ comes from the coproduct.

We therefore conclude that $\cO_X$ is a hypercomplete $\cTetk$-structure on $\cX$.
Since $\cGetkder$ is a geometric envelope for $\cTetk$, we can identify $\cO_X$ with a $\cGetkder$-structure on $\cX$.

\begin{prop} \label{prop:Get_scheme}
	The pair $(\cX, \cO_X)$ is a $\cGetkder$-scheme.
\end{prop}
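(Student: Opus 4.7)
The plan is to use an étale atlas of $X$ to exhibit $(\cX, \cO_X)$ locally as the spectrum of an object of $\mathrm{CAlg}_k$. Since $X$ is $n$-geometric, we may choose an étale atlas $p\colon \coprod_i \Spec(A_i) \to X$, i.e.\ an effective epimorphism in $\Sh(\dAff_k, \tauet)^\wedge$ whose components are étale. Each composition $\Spec(A_i) \to X$ is an object of the small étale site $X_\et$, and hence through the (hyper)sheafified Yoneda embedding defines an object $V_i \in \cX$. The claim is that the family $\{V_i\}$ satisfies the two conditions from the definition of a $\cGetkder$-scheme.

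For the effective epimorphism condition, one needs $\coprod V_i \to \mathbf 1_\cX$ to be an effective epimorphism in $\cX$. Unwinding the identification of $\mathbf 1_\cX$ with the sheaf associated to $X$ itself, this is simply the assertion that the family $\{\Spec(A_i) \to X\}$ is a $\tauet$-cover of $X$ in $X_\et$, which becomes an effective epimorphism in the associated sheaf $\infty$-topos.

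For the local equivalence, fix an index $i$ and write $V = V_i$ and $A = A_i$. The standard identification of the slice of a sheaf topos over a representable gives $\cX_{/V} \simeq \Sh((X_\et)_{/V}, \tauet)$; since $\Spec(A) \to X$ is étale, the slice site $(X_\et)_{/V}$ is naturally equivalent to the small étale site $A_\et$, producing an equivalence of $\infty$-topoi $\cX_{/V} \simeq \Sh(A_\et, \tauet)$. Under this equivalence, the restriction $\cO_X|_V$ corresponds to the functor $\cGetkder \to \Sh(A_\et, \tauet)$ sending $U$ to the sheafification of $\Map_{\dAff_k}(-, U)$ on $A_\et$, which is precisely the structure sheaf $\cO_A$ of $\SpecEt(A)$ as defined in \cref{subsec:DAG_V}. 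By the universal property of $\SpecEt$ recorded in \cite[Theorem 2.2.12]{DAG-V}, this identification is canonical and compatible with $\cGetkder$-structures.

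The main subtlety is hypercompleteness: the sheaf $\cO_X$ was constructed on the hypercompleted site, whereas $\SpecEt(A)$ is defined via the non-hypercomplete $\infty$-topos on $A_\et$. One must therefore check that replacing $\Sh(A_\et, \tauet)$ by its hypercompletion does not disturb the structure sheaf. This follows from \cite[Theorem 4.3.32.(3)]{DAG-V}, which asserts that $\cO_A$ is already hypercomplete as an object of $\Sh(A_\et, \tauet)$, so that restriction along hypercompletion is an equivalence on the level of $\cGetkder$-structures. Granted this, the identification $(\cX_{/V_i}, \cO_X|_{V_i}) \simeq \SpecEt(A_i)$ holds and $(\cX, \cO_X)$ is a $\cGetkder$-scheme.
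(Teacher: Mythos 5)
Your overall strategy coincides with the paper's: pass from an \'etale atlas $\coprod U_i \to X$ to objects $V_i \in \cX$, identify the slices $(\cX_{/V_i}, \cO_X|_{V_i})$ with $\SpecEt(A_i)$ via the construction of the spectrum functor, and check that $\coprod V_i \to \mathbf 1_\cX$ is an effective epimorphism. Your treatment of the local identification, including the remark that hypercompleteness of the structure sheaf (via \cite[Theorem 4.3.32.(3)]{DAG-V}) reconciles the hypercomplete construction of $\cO_X$ with the non-hypercomplete definition of $\SpecEt$, is in line with what the paper does.

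The gap is in the effective-epimorphism step, which you dispatch in one sentence by ``identifying $\mathbf 1_\cX$ with the sheaf associated to $X$ itself.'' That identification is not available as stated: $X$ is not an object of the small \'etale site $X_\et$ (its objects are the affines, or at most the atlas components, \'etale over $X$), so the terminal object of $\cX = \Sh(X_\et, \tauet)$ is not a priori representable, and the atlas condition --- an effective epimorphism in the big topos $\Sh(\dAff_k,\tauet)^\wedge$ --- does not formally translate into a covering sieve of $\mathbf 1_\cX$ in the small site. This is precisely where the paper's proof does its real work: it enlarges $X_\et$ to the site $((\mathrm{Geom}^{\le n}_{/X})_\et, \tauet)$ of \'etale maps from $n$-geometric $n$-truncated stacks, so that $X$ becomes the terminal object and $\mathbf 1_\cX$ is representable; it must then justify that this enlargement does not change the (non-hypercomplete) topos, which requires observing that both sites are $n$-categories, hence both topoi are $n$-localic, so the comparison reduces to $n$-truncated objects where \cite[Lemma 2.34]{Porta_Yu_Higher_analytic_stacks_2014} applies; only then is the atlas property read off as an epimorphism of $\pi_0$-sheaves of representables. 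You could instead argue directly on the small site --- for each affine $W \in X_\et$ the pullbacks $W \times_X U_i$ are \'etale and geometric over $W$, and their own affine \'etale atlases yield an \'etale cover of $W$ refining $\{V_i\}$ --- but some such argument must be supplied; as written this step is an assertion rather than a proof.
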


\begin{proof}
	Choose an \'etale atlas $p \colon \coprod U_i \to X$ in the category $\mathbf{DM}$.
	Since each morphism $p_i \colon U_i \to X$ is \'etale, we see each of them defines an element in the small \'etale site $(X_{\mathrm{\acute{e}t}}, \tauet)$.
	Since this site is subcanonical, we can identify each $U_i$ with objects $V_i \in \cX$.
	Moreover, the \'etale subtopos $(\cX_{/V_i}, \cO_X |_{V_i})$ is canonically identified with $(\Sh((U_i)_{\mathrm{\acute{e}t}}, \tauet), \cO_{U_i})$.
	The construction of the (absolute) spectrum functor of \cite[§2.2]{DAG-V}, shows that
	\[ \Spec^{\mathrm{\acute{e}t}}(U_i) \simeq (\Sh((U_i)_{\mathrm{\acute{e}t}}, \tauet), \cO_{U_i}) \]
	It will therefore be sufficient to show that the morphism $\coprod V_i \to \mathbf 1_{\cX}$ is an effective epimorphism.
	In order to do this it will be convenient to replace the small \'etale site $X_{\mathrm{\acute{e}t}}$ with the site $((\mathrm{Geom}^{\le n}_{/X})_{\mathrm{\acute{e}t}}, \tauet)$ of \'etale maps $Y \to X$ where $Y$ is an $n$-geometric $n$-truncated stack.
	We claim that the natural inclusion
	\[ (X_{\mathrm{\acute{e}t}}, \tauet) \to ((\mathrm{Geom}^{\le n}_{/X})_{\mathrm{\acute{e}t}}, \tauet) \]
	is an equivalence of sites in virtue of \cite[Lemma 2.34]{Porta_Yu_Higher_analytic_stacks_2014}.
	Indeed, even though the cited lemma actually works only in the hypercomplete setting, we can easily adapt it to the present situation as follows: the mapping spaces in $(\mathrm{Geom}^{\le n}_{/X})_\et$ are $n$-truncated, hence this is (categorically equivalent to) an $n$-category.
	Therefore the category of (non hypercomplete) sheaves on this site is an $n$-localic topos.
	The same goes for $\Sh(X_\et, \tauet)$, as we already discussed.
	Therefore, in order to check that the induced adjunction is an equivalence of $\infty$-categories, it is enough to check that the restriction to $n$-truncated object is an equivalence. This follows from the cited lemma, since we know that this morphism of sites induces an equivalence on all hypercomplete objects.
	
	In this way, we see that $\mathbf 1_{\cX}$ is the representable sheaf associated to the identity map $\mathrm{id}_X \colon X \to X$.
	We are therefore left to show that
	\[ \coprod \pi_0 \Map(-, U_i) \to \pi_0 \Map(-, X) \]
	is an epimorphism of sheaves on $((\mathrm{Geom}^{\le n}_{/X})_{\mathrm{\acute{e}t}}, \tauet)$.
	This follows immediately from the fact that the maps $U_i \to X$ were an atlas for $X$.
\end{proof}

We are left to prove that $\phi(\cX, \cO_X) \simeq X$.
We can proceed by induction on the geometric level $n$ of $X$.
If $n = -1$, the statement is obvious.
Otherwise, let $U_i \to X$ be an \'etale atlas for $X$.
Let $U \coloneqq \coprod U_i$ and let $U^\bullet$ be the \v{C}ech nerve of $U \to X$.
Combining the proof of \cref{prop:Get_scheme}, \cref{prop:phi_X_geometric} and the induction hypothesis, we see that $U^\bullet$ is a groupoid presentation for both $X$ and $\phi(\cX, \cO_X)$.
We therefore proved that the essential image of the functor
\[ \phi \colon \mathrm{Sch}(\cGetkder) \to \Sh(\dAff_k, \tauet) \]
contains all the \DM stacks in the sense of \cite{HAG-II}.

\appendix

\section{Descent vs hyperdescent}

The goal of this section is to prove the following folklore result:

\begin{prop} \label{prop:descent_vs_hyperdescent}
	Let $(\cC, \tau)$ be an $\infty$-Grothendieck site and let $\cD$ be an $(n+1,1)$-category.
	Then A functor $F \colon \cC^{\mathrm{op}} \to \cD$ satisfies descent if and only if it satisfies hyperdescent.
\end{prop}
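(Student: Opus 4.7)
The plan is to prove the nontrivial direction (descent implies hyperdescent) by reducing to the case of space-valued presheaves via Yoneda, and then invoking the standard fact that $n$-truncated sheaves are automatically hypercomplete. The direction hyperdescent $\Rightarrow$ descent will be immediate: any covering family has a \v{C}ech nerve, and this \v{C}ech nerve is a hypercover, so any functor satisfying hyperdescent will in particular satisfy \v{C}ech descent, which is equivalent to descent in the usual sieve formulation (cf.\ \cite[§6.2.4]{HTT}).

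For the converse, I would first reduce to the case $\cD = \cS$. Given any $F \colon \cC^{\mathrm{op}} \to \cD$ and any hypercover (or \v{C}ech nerve) $U^\bullet \to U$, the natural map $F(U) \to \lim_{[k] \in \bDelta} F(U^k)$ is an equivalence in $\cD$ if and only if for every object $d \in \cD$ the induced map $\Map_\cD(d, F(U)) \to \lim_{[k] \in \bDelta} \Map_\cD(d, F(U^k))$ is an equivalence in $\cS$, because the Yoneda embedding $\cD \hookrightarrow \Fun(\cD^{\mathrm{op}}, \cS)$ is fully faithful and preserves limits. Thus $F$ satisfies descent (resp.\ hyperdescent) if and only if $\Map_\cD(d, F(-)) \colon \cC^{\mathrm{op}} \to \cS$ does for every $d \in \cD$. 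Since $\cD$ is categorically equivalent to an $(n+1,1)$-category, the mapping spaces $\Map_\cD(d, d')$ are $n$-truncated, so each of these composites factors through $\tau_{\le n}\cS$.

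This reduces the statement to the following: any $n$-truncated presheaf of spaces on $(\cC, \tau)$ that is a sheaf is automatically a hypersheaf. This is a well-known consequence of the discussion in \cite[§6.5.2--6.5.3]{HTT}. Explicitly, \cite[6.5.2.9]{HTT} (and its iteration) shows that every $n$-truncated object of the $\infty$-topos $\Sh(\cC, \tau)$ is hypercomplete; equivalently, the inclusion $\Sh(\cC, \tau)^\wedge \hookrightarrow \Sh(\cC, \tau)$ induces an equivalence $\tau_{\le n} \Sh(\cC, \tau)^\wedge \simeq \tau_{\le n} \Sh(\cC, \tau)$. An $n$-truncated sheaf therefore coincides with its hypercompletion and so satisfies hyperdescent.

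The only genuinely nontrivial ingredient is \cite[6.5.2.9]{HTT} (that $n$-truncated objects are hypercomplete), which I would quote rather than reprove. The rest---the Yoneda reduction and the assembly---is formal. A mild care point is verifying that mapping spaces in an $(n+1,1)$-category really are $n$-truncated as objects of $\cS$, but this is the content of \cite[2.3.4.18]{HTT} and is already used elsewhere in the note (e.g.\ in the proof of \cref{lem:n_topoi}).
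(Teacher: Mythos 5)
Your proof is correct and takes essentially the same route as the paper: reduce to $n$-truncated $\cS$-valued presheaves via the limit-preserving, jointly conservative functors $\Map_{\cD}(d,-)$, then use that an $n$-truncated sheaf cannot distinguish the $\infty$-connected map $|h_{U^\bullet}| \to h_U$ associated to a hypercover. The only difference is cosmetic: you quote the packaged statement that truncated objects are hypercomplete (\cite[\S 6.5.2]{HTT}), whereas the paper unwinds that fact inline from \cite[6.5.3.11]{HTT} and the compatibility of $\tau_{\le n}$ with $\infty$-connected morphisms.
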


\begin{proof}
	Let $D \in \cD$ be any object and let $c_D \colon \cD \to \cS$ be the functor \emph{corepresented} by $D$.
	Then $F$ satisfies descent (resp.\ hyperdescent) if and only if $c_D \circ F$ does.
	Since $\cD$ is an $(n+1,1)$-category, we see that $c_D \circ F$ takes values in $\tau_{\le n} \cS$. 
	Therefore, we may replace $\cD$ with $\cS$ and suppose that $F$ takes values in the full subcategory of $n$-truncated objects.
	For every $U \in \cC$, let us denote by $h_U$ the sheafification of the presheaf associated to $U$.
	Since $F$ is an $n$-truncated object, we see that
	\[ \Map_{\Sh_{\le n}(\cC, \tau)}(\tau_{\le n} h_U, F) \simeq \Map_{\Sh(\cC, \tau)}(h_U, F) \simeq F(U) \]
	where the last equivalence is obtained combining the universal property of the sheafification with the Yoneda lemma.
	Therefore, it will be sufficient to show that for every hypercover $U^\bullet \to U$ in $\cC$, the augmented simplicial diagram
	\[ \tau_{\le n} h_{U^\bullet} \to \tau_{\le n} h_U \]
	is a colimit diagram in $\Sh_{\le n}(\cC, \tau)$.
	Since $\tau_{\le n}$ is a left adjoint, we see that in $\Sh_{\le n}(\cC, \tau)$ the relation
	\[ |\tau_{\le n} h_{U^\bullet}| \simeq \tau_{\le n} |h_{U^\bullet}| \]
	holds.
	Moreover, since $U^\bullet \to U$ is an hypercover, the morphism $|h_{U^\bullet}| \to h_U$ is $\infty$-connected in virtue of \cite[6.5.3.11]{HTT}.
	Since $\tau_{\le n}$ commutes with $\infty$-connected morphisms, \personal{Indeed, if $f \colon F \to G$ is $\infty$-connected, then $\tau_{\le n}(f)$ is an equivalence on $\pi_i$ for $i \le n$ - because $\tau_{\le n}$ doesn't change such homotopy groups - and it is clearly an equivalence on $\pi_i$ if $i > n$ - for these groups became trivial.} we conclude that
	\[ \tau_{\le n} |h_{U^\bullet}| \to \tau_{\le n} h_U \]
	is an $\infty$-connected morphism between $n$-truncated objects.
	Therefore it is an equivalence in $\Sh(\cC, \tau)$.
	In conclusion, the morphism $|\tau_{\le n} h_{U^\bullet}| \to \tau_{\le n} h_U$ is an equivalence in $\Sh_{\le n}(\cC, \tau)$.
	The proof is now complete.
\end{proof}

\ifpersonal

\section{An alternative proof}

We sketch a different way of proving \cref{prop:phi_X_geometric}.
First, of all, we can easily deal with the case where $(\cX, \cO_{\cX})$ is $0$-truncated (that is, where $\cO_{\cX}$ is $0$-truncated).
In order to do so, we don't need to prove \cref{prop:algebraic_spaces}, but it is sufficient to invoke \cite[Lemma 2.6.20, Theorem 2.6.18]{DAG-V}.
Then one proceeds by induction precisely as in the proof of \cref{prop:phi_X_geometric}.
We can then deduce the result in full generality combining the underived case with Lurie's representability theorem (the simple version of \cite[Theorem C.0.9]{HAG-II} will be enough).
In order to do so, we only need to check the following things:
\begin{enumerate}
	\item $\phi(X)$ has an obstruction theory;
	\item $\phi(X)$ is convergent.
\end{enumerate}
The second condition is rather straightforward to verify. Indeed, it is enough to show that if $A \in \mathrm{sCRing}$, then one has
\[ \Spec^{\mathrm{\acute{e}t}}(A) \simeq \colim \Spec^{\mathrm{\acute{e}t}}(\tau_{\le n} A) \]
Observe that the morphisms $\Spec^{\mathrm{\acute{e}t}}(\tau_{\le n - 1}A) \to \Spec^{\mathrm{\acute{e}t}}(\tau_{\le n} A)$ induce the identity at the level of topoi and the morphism $\cO_{\tau_{\le n} A} \to \cO_{\tau_{\le n - 1} A}$ exhibits $\cO_{\tau_{\le n - 1} A}$ as $(n-1)$-truncation of $\cO_{\tau_{\le n} A}$.
The statement now follows from the fact that $\cO_A$ is hypercomplete, as we already discussed.

We are left to prove that $\phi(X)$ has an obstruction theory. Recall from \cite[Definition 1.4.2.1]{HAG-II} that this means that $F$ is infinitesimally cartesian and it has a (global) cotangent complex.
It is easy to show that $\phi(X)$ is infinitesimally cartesian.
Indeed, if
\[ \begin{tikzcd}
	A \oplus_d \Omega M \arrow{d} \arrow{r} & A \arrow{d}{d} \\
	A \arrow{r}{d_0} & A \oplus M
\end{tikzcd} \]
is a pullback square and $d$ is a derivation (and $d_0$ is the null derivation), then the induced square
\[ \begin{tikzcd}
	\Spec^{\mathrm{\acute{e}t}}(A \oplus M) \arrow{r} \arrow{d} & \Spec^{\mathrm{\acute{e}t}}(A) \arrow{d} \\
	\Spec^{\mathrm{\acute{e}t}}(A) \arrow{r} & \Spec^{\mathrm{\acute{e}t}}(A \oplus_d M)
\end{tikzcd} \]
is a pushout square in the $\infty$-category of $\cGetkder$-schemes thanks to \cite[Theorem 6.1]{DAG-IX}.

We are left to check the existence of the existence of the cotangent complex.

\fi

\bibliographystyle{plain}
\bibliography{dahema}

\end{document}